\documentclass[reqno,11pt]{amsart}

\usepackage{graphicx} 
\usepackage{todonotes}
\usepackage{aliascnt} 
\usepackage[english]{babel}
\usepackage{hyperref}
\usepackage{amssymb,amsmath,amsthm,color,mathtools,tikz,subfig,dirtytalk,csquotes,soul,orcidlink,enumitem}
\usepackage[dvipsnames]{xcolor}
\usepackage[capitalise, nameinlink]{cleveref}
\usepackage{comment}
\usepackage{amsmath}
\usepackage{amsthm}
\usepackage[left=2.8cm,right=2.8cm,top=2.8cm,bottom=2.8cm]{geometry}

\usepackage[
    style=numeric,    
    giveninits=true,     
    isbn=false,          
    url=false,
    doi=false,            
    eprint=true,         
    backend=biber,        
    sorting=nty,   
    sortcites=true, 
    maxbibnames=99
]{biblatex}
\numberwithin{equation}{section}

\theoremstyle{definition} 

\crefname{example}{Example}{Examples}

\theoremstyle{plain}

\newaliascnt{theo}{example}       
\newtheorem{theo}[theo]{Theorem}  
\aliascntresetthe{theo}           
\crefname{theo}{Theorem}{Theorems} 

\newaliascnt{prop}{example}

\aliascntresetthe{prop}
\crefname{prop}{Proposition}{Propositions}

\newaliascnt{lemma}{example}
\newtheorem{lemma}[lemma]{Lemma}
\aliascntresetthe{lemma}
\crefname{lemma}{Lemma}{lemmata}

\newaliascnt{cor}{example}

\aliascntresetthe{cor}
\crefname{cor}{Corollary}{Corollaries}

\theoremstyle{definition}

\newaliascnt{dhef}{example}

\aliascntresetthe{dhef}
\crefname{dhef}{Definition}{Definitions}

\newaliascnt{hyp}{example}

\aliascntresetthe{hyp}
\crefname{hyp}{Assumption}{Assumptions}

\theoremstyle{remark}

\newaliascnt{remark}{example}
\newtheorem{remark}[remark]{Remark}
\aliascntresetthe{remark}
\crefname{remark}{Remark}{Remarks}

\newaliascnt{obs}{example}
\newtheorem{obs}[obs]{Observation}
\aliascntresetthe{obs}
\crefname{obs}{Observation}{Observations}

\def\elle#1{L^{#1}}
\def\elleom#1{L^{#1}(\Omega)}

\def\Sd{{\mathbb{S}^{d-1}}}
\renewcommand{\phi}{\varphi}

\def\C{\mathcal{C}}
\def\sub{\subseteq}

\newlist{assumption}{enumerate}{1}
\setlist[assumption,1]{label=(\roman*),ref=(\roman*)}
\crefname{assumptioni}{assumption}{assumptions}

\def\Haus{{\mathcal{H}^{d-1}}}

\def\O{\Omega}

\def \e{\varepsilon}

\def\L{\mathcal{L}}

\def\N{\mathbb{N}}
\def\Z{\mathbb{Z}}

\def\R{\mathbb{R}}
\def\Rd{\mathbb{R}^d}

\DeclareRobustCommand{\rchi}{{\mathpalette\irchi\relax}}
\newcommand{\irchi}[2]{\raisebox{\depth}{$#1\chi$}} 

\title{A Homogenized Maz'ya--Shaposhnikova formula}

\author{Andrea Braides
\orcidlink{0000-0003-1612-2785}}\email{braides@mat.uniroma2.it}
\address{Department of Mathematics, University of Rome Tor Vergata, via della Ricerca Scientifica~1, 00133 Rome, Italy}
\author{Marco Picerni
\orcidlink{0009-0004-4364-4831}}\email{mpicerni@sissa.it}
\address{SISSA, via Bonomea 265, 34136, Trieste, Italy}
\author{Alec Jacopo Almo Schiavoni Piazza
\orcidlink{0000-0003-2652-7081}}\email{aschiavo@sissa.it}
\address{SISSA, via Bonomea 265, 34136, Trieste, Italy}

\subjclass[2020]{46E35, 35R11, 35B27, 49J45, 26A33} 

\keywords{Maz'ya--Shaposhnikova formula, Gagliardo seminorms, asymptotic volume ratio, $s$-perimeters, Gamma-convergence, homogenization}

\begin{document}

\begin{abstract}
We study the asymptotic behaviour of fractional Gagliardo-type energies
with a periodic, bounded and bounded away from zero coefficient depending on both
variables, in the joint limit of vanishing fractional exponent
$s\to0^+$ and vanishing period $\varepsilon\to0^+$. Assuming without loss of generality that
$\varepsilon^{-2s}$ converges to some $\lambda\in[1,+\infty]$, we prove
that the energies, rescaled by $s$, $\Gamma$-converge in the weak
topology of $L^2(\mathbb{R}^d)$ to a multiple of the squared $L^2$
norm, whose constant is given by an effective coefficient $a_\lambda$
defined through a cell problem. The coefficient $a_\lambda$ results from
the competition between the averaging of long-range interactions, which
drives the Maz'ya--Shaposhnikova limit, and the nonlocal energy of
microscopic oscillations at finite range. It interpolates between the
harmonic mean ($\lambda=1$) and the arithmetic mean ($\lambda=+\infty$)
of a coefficient obtained by averaging interactions ``at infinity''. 
The proof relies on Riemann--Lebesgue-type lemmas for singular kernels, 
on a blow-up argument for the lower bound, and on the logarithmic-scaling 
properties of the energies. The case $1<p<\infty$ is also discussed.
\end{abstract}

\maketitle

\tableofcontents

\section{Introduction}\label{sec:introduction}
The Maz'ya--Shaposhnikova formula  describes the asymptotic behaviour of the Gagliardo seminorm as the fractional exponent $s$ tends to zero. More precisely, if for $s\in(0,1)$ the Gagliardo seminorm of $u\colon\Rd\to\R$ is given by 
\[
[u]_{H^s(\Rd)}^2\coloneqq\int_{\Rd}\int_{\Rd}\frac{|u(x)-u(y)|^2}{|x-y|^{d+2s}}\,dx\,dy ,
\]
then, for every $u\in H^{s_0}(\Rd)$ for some $s_0\in(0,1)$,
\begin{equation}\label{intro:MS}
\lim_{s\to0^+}s\,[u]_{H^s(\Rd)}^2=\sigma_{d-1}\|u\|^2_{L^2(\Rd)},
\end{equation}
where $\sigma_{d-1}$ is the $(d-1)$-dimensional Hausdorff measure of the unit sphere $\Sd$  \cite{MazyaShaposhnikova2002}. Hence, although the seminorm is defined through nonlocal interactions between pairs of points, its asymptotic behaviour as $s\to0^+$ is a zero-order functional.

In recent years, the Maz'ya--Shaposhnikova formula has been extended to several settings beyond its original Euclidean formulation. In metric measure spaces, volume growth at infinity (and, in particular, the asymptotic volume ratio) plays a central role in the construction of suitable convolution kernels \cite{Han-MS-BBM,HanPinamontirigidity}. Such conditions are guaranteed, for example, by generalized Bishop--Gromov inequalities \cite{HanPinamontiXu2025}. Moreover, necessary and sufficient conditions for the validity of a Maz'ya--Shaposhnikova limit for general convolution kernels are established in \cite{DaDfGioPi-Necess-Suff}: such conditions amount to a control on the escape of kernel mass to infinity and the attenuation of short-range interactions.
Related asymptotics have been obtained for fractional perimeters \cite{Quartet,DaFaPi1}, for fractional energies associated with the Ornstein--Uhlenbeck semigroup \cite{GaussianFracPer}, and for heat-kernel-based fractional perimeters on Riemannian manifolds \cite{CaselliGennaioli}.
 
The $\Gamma$-convergence counterpart of \eqref{intro:MS} has been established in two regimes. In \cite{CriDLKNP23}, functions are extended by zero outside a bounded domain, so that their value at infinity is zero. In \cite{DaFaPi1}, localized Gagliardo seminorms, such as those defining fractional perimeters, are considered for functions that need not be globally square-integrable: the limit functional describes the interaction between the values of $u$ in $\Omega$ and its values at infinity, thus connecting \eqref{intro:MS} with the asymptotics of fractional perimeters. In the Euclidean setting, the same work establishes $\Gamma$-convergence in the weak-$L^2(\Omega)$ topology for prescribed exterior data that need not vanish at infinity. In both cases the behaviour of $u$ at infinity is fixed a priori, and the $\Gamma$-limit only acts on its values in a bounded region.

\smallskip
In this paper, we investigate how \eqref{intro:MS} is affected by periodic heterogeneities in the interaction kernel. We consider fractional energies whose interactions are weighted by a periodic coefficient depending on the spatial variables and bounded from above and below by positive constants. The resulting problem combines two limiting processes, the vanishing of the fractional exponent and the homogenization of the oscillating coefficient. Interactions at distances larger than the macroscopic scale see the behaviour of $u$ at infinity, as in \eqref{intro:MS}. Interactions at distances between the period $\e$ and the macroscopic scale see oscillations as periodic perturbations with zero mean of the frozen macroscopic profile of $u$, with an effect reminiscent of \cite{PiniSantilli2026}, where a Maz'ya--Shaposhnikova formula is given for interactions in the torus. Since, as $s\to0^+$, the kernel distributes its mass uniformly over logarithmic scales, the relative weight of these two ranges is governed by $\e^{-2s}=e^{2s|\log\e|}$.
 
Our main result characterizes this interplay through a family of effective coefficients $a_\lambda$, defined by a cell-minimization problem, and establishes the corresponding $\Gamma$-convergence of the energies in the weak topology of $L^2(\Rd)$. The parameter $\lambda\coloneqq\lim_{\e\to0}\e^{-2s}\in[1,+\infty]$ measures the balance between the two ranges of interactions: for $\lambda=1$ only the behaviour at infinity contributes, whereas as $\lambda$ increases the periodic interactions penalize more and more the oscillations of the corrector.
The case $\lambda=+\infty$ corresponds to a {\em separation of scales}: the limit is obtained by first letting $\varepsilon\to0$ and then taking the Maz'ya–Shaposhnikova limit $s\to0$.
The precise statement is the following.

\begin{theo}[A homogenized Maz'ya--Shaposhnikova theorem]\label{thm:main}
Let $a\colon\Rd\times\Rd\to\R$ be a  Borel function such that there exist $0<\alpha\leq\beta$ for which $\alpha\leq a(x,y)\leq\beta$ for all $x,y\in\Rd$, and $1$-periodic; that is, $a(x+k,y+j)= a(x,y)$ for all $k,j\in\mathbb Z^d$. Let $\e>0$ and $s\coloneqq s(\e)\in(0,1)$. Let
\begin{equation}\label{intro:fe}
F_\e(u)\coloneqq s\int_{\Rd}\int_{\Rd}a\Big(\frac{x}{\e},\frac{y}{\e}\Big)\frac{|u(x)-u(y)|^2}{|x-y|^{d+2s}}\,dx\,dy
\end{equation}
be defined in $L^2(\Rd)$. If $\lim_{\e\to0}s=0$ and
\begin{equation*}
\lim_{\e\to0}\e^{-2s}=\lambda\in[1,+\infty],
\end{equation*}
then
\begin{equation*}
\Gamma\text{-}\lim_{\e\to0}F_\e(u)=\frac{\sigma_{d-1}}{2}\,a_\lambda\|u\|^2_{L^2(\Rd)},
\end{equation*}
for all $u\in L^2(\Rd)$, where the $\Gamma$-limit is computed with respect to the sequential weak topology of $L^2(\Rd)$, $a_\lambda$ is given by 
\begin{eqnarray}\label{eq:a-lambda-0}\nonumber
&&\hskip-1.3cm a_\lambda\coloneqq\inf\bigg\{\int_{Q}\int_{Q}(a(x,y)+a(y,x))|1+\phi(x)|^2\,dx\,dy\\&&\ \quad+(\lambda-1)\int_{Q}\int_{Q} a(x,y)|\phi(x)-\phi(y)|^2\,dx\,dy:\phi\in L^2(Q)\text{ 
and }\int_Q\phi(x)\,dx=0\bigg\}
\end{eqnarray}
if $\lambda<+\infty$, while
\begin{eqnarray}\label{eq:a-lambda-infty}
a_{+\infty}\coloneqq\ 2\int_{Q}\int_{Q}a(x,y)dx\,dy,
\end{eqnarray}
where $Q=(0,1)^d$ is the unit cube in $\Rd$, and $\sigma_{d-1}$ is the $(d-1)$-Hausdorff measure of the unit sphere $\Sd$ in $\Rd$.
\end{theo}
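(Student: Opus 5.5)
We prove a liminf inequality and a matching recovery sequence. Both rest on splitting the double integral in \eqref{intro:fe} according to $|x-y|$ into three ranges. The short range is $|x-y|<\e\delta$. The mesoscopic range is $\e\delta\le|x-y|\le R\e$ with $R$ large. The long range is $|x-y|>R\e$, which we split further at a macroscopic scale $\eta$.

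The guiding computation is the scaling of the kernel: $s\int_{r<|z|<\rho}|z|^{-d-2s}\,dz=\frac{\sigma_{d-1}}{2}(r^{-2s}-\rho^{-2s})$.
\begin{itemize}
\item The short range contributes at most $\frac{\sigma_{d-1}}2 (\e\delta)^{-2s}\cdot O(1)$ beyond a negligible term, so it carries relative weight $\lambda\delta^{-2s}\to\lambda$ of the total mass.
\item Mass $(1-\eta^{-2s})\to0$ sits at $|z|\in(\eta,\infty)$ only if $\eta$ is fixed. Precisely: $s\int_{|z|>\eta}$ gives $\frac{\sigma_{d-1}}2\eta^{-2s}\to\frac{\sigma_{d-1}}2$. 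The range $R\e<|z|<\eta$ gives $\frac{\sigma_{d-1}}2((R\e)^{-2s}-\eta^{-2s})\to\frac{\sigma_{d-1}}2(\lambda-1)$.
\item The range $|z|<R\e$ gives a contribution that vanishes once it is separated from the singular part.
\end{itemize}

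So the weight $1$ comes from $|z|>\eta$. In this range $u(x)$ and $u(y)$ decorrelate and $|u(x)-u(y)|^2\approx u(x)^2+u(y)^2$. The weight $\lambda-1$ comes from the range $R\e<|z|<\eta$, where $x,y$ are macroscopically close but the cells of $x/\e$ and $y/\e$ are asymptotically independent. The key tool is a Riemann--Lebesgue-type lemma for the kernel $s|z|^{-d-2s}$ restricted to $|z|>R\e$. For $g$ periodic in both variables it should give
\[
s\iint_{|x-y|>R\e} g\big(\tfrac x\e,\tfrac y\e\big)h(x,y)|x-y|^{-d-2s}
\approx\Big(\int_Q\int_Q g\Big)\, s\iint_{|x-y|>R\e}h\,|x-y|^{-d-2s}.
\]
This lemma is proved by averaging over cells of size $\e$, using $|z|\ge R\e$ to control the variation of the kernel across a cell by $O(1/R)$.

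\emph{Upper bound.} For $\lambda<\infty$, take $\phi$ admissible and extend it periodically. It is convenient to assume $\phi$ bounded, which we may by density in the cell problem. For $u$ smooth with compact support, set $u_\e(x)=u(x)(1+\phi(x/\e))$. Then $u_\e\rightharpoonup u$ weakly in $L^2$ because $\phi$ has zero mean.
\begin{itemize}
\item In the far range, $|u_\e(x)-u_\e(y)|^2\approx u(x)^2|1+\phi(x/\e)|^2+u(y)^2|1+\phi(y/\e)|^2$, since the cross term is negligible as $x,y$ are far apart and $u$ has compact support. The lemma turns this into $\frac{\sigma_{d-1}}2\|u\|^2\iint_{Q\times Q}(a(x,y)+a(y,x))|1+\phi(x)|^2$.
\item In the intermediate range, $u(x)\approx u(y)$, so $|u_\e(x)-u_\e(y)|^2\approx u(x)^2|\phi(x/\e)-\phi(y/\e)|^2$. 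The lemma gives $\frac{\sigma_{d-1}}2(\lambda-1)\|u\|^2\iint a|\phi(x)-\phi(y)|^2$.
\item The range $|x-y|<R\e$ is the delicate one. Here we cannot use the lemma, and $\phi$ is only $L^2$, so this term is not automatically small. We use $s\to0$: the contribution is $s\iint_{|z|<R\e}$, and with $\phi$ merely in $L^2$ the seminorm $[\phi]_{H^{s}}$ may be infinite. We therefore replace $\phi$ by a mollified $\phi_k\in C^\infty$, so that this term is $O(s\,\e^{-2s}(R)^{\dots})\to0$, and pass to the limit in $k$ in the cell formula.
\end{itemize}
A diagonal argument plus density of smooth $u$ then gives the upper bound; the density step uses continuity of the limit in strong $L^2$ and lower semicontinuity of the $\Gamma$-limsup. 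For $\lambda=\infty$ take $\phi=0$ and the constant recovery sequence $u_\e=u$. By the lemma, the far part gives $\frac{\sigma_{d-1}}2\|u\|^2\cdot2\iint a$. The intermediate part vanishes since $\phi=0$, leaving $a_{+\infty}$. One checks $a_\lambda\uparrow a_{+\infty}$.

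\emph{Lower bound.} This is the main obstacle, since $u_\e\rightharpoonup u$ only weakly. We use a blow-up and two-scale argument.
\begin{itemize}
\item Drop the short range, using nonnegativity.
\item Localize $u_\e$ on a macroscopic grid of cubes of side $\eta$. On each cube, write $u_\e$ through its two-scale decomposition $u_\e(x)\approx \bar u(x)(1+\phi_\e(x/\e))$, where $\phi_\e$ is obtained by averaging $u_\e/\bar u-1$ over translates by $\e\Z^d$ within the cube.
\item Convexity of the integrand in $u_\e$ (it is quadratic) together with Jensen over the translations lets us bound the far and intermediate contributions from below by the cell functional evaluated at $\phi_\e$, with weights $1$ and $\lambda-1$ respectively. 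This step again uses the Riemann--Lebesgue lemma, now applied in the form of a lower bound via periodic averaging.
\item Minimizing over $\phi$ then gives $\liminf F_\e(u_\e)\ge\frac{\sigma_{d-1}}2a_\lambda\sum|\bar u_{\rm cube}|^2\eta^d$. Letting $\eta\to0$ gives the result, using weak lower semicontinuity of $\|\cdot\|_{L^2}^2$ to control the far-field term that involves $\int u_\e^2$.
\end{itemize}
The delicate point is that the far-field term produces $\int u_\e^2|\dots|$ rather than $\bar u^2$. We must show that the optimal splitting of $u_\e^2$ between its mean and oscillating parts is exactly what the cell problem encodes, via the quadratic cell functional's lower semicontinuity in weak $L^2(Q)$. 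For $\lambda=\infty$ we argue by monotonicity in $\lambda$: compare with $F_\e$ using any finite $\lambda'$, which bounds $F_\e$ from below along the sequence. Then let $\lambda'\to\infty$, using $a_{\lambda'}\to a_{+\infty}$, which follows from the cell problem forcing $\phi\to0$.
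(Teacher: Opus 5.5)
Your upper bound is correct, and it simplifies the paper's. With $u\in C^\infty_c(\Rd)$ instead of dyadic step functions, the mismatch $u(x)-u(y)$ costs only $O(s\,\eta^{1-2s})$ on $\{|x-y|<\eta\}$, so you never need the capacity estimate of \cref{lem:jump}. With $\phi$ mollified, the range $\{|x-y|<R\e\}$ costs $O(sR^{2}\e^{-2s})\to0$ for $\lambda<+\infty$. Two other choices are also workable alternatives to the paper. The first is a fixed macroscopic grid instead of the Fonseca--M\"uller blow-up. The second is treating $\lambda=+\infty$ by monotonicity, once made precise: keep only the window $\e'<|x-y|<\delta$ with $(\e')^{-2s}\to\lambda'$, apply \cref{lem:averaging} with $c=\e'/\e\to+\infty$, and let $\lambda'\to+\infty$ using $a_{\lambda'}\to a_{+\infty}$. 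The paper instead multiplies by $\e^{2s}$ to force $\phi_\e\to0$. Your liminf as written, however, is not yet a proof. The two steps you present as routine are exactly where the paper does its work, and neither holds as stated.

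First, the decorrelation $|u_\e(x)-u_\e(y)|^2\approx|u_\e(x)|^2+|u_\e(y)|^2$ on $\{|x-y|>\eta\}$ is false for a sequence that is only bounded in $L^2(\Rd)$. Take $u_\e=L^{-d/2}\rchi_{B_L}$ with $s\log L\to+\infty$. The contribution of $|u_\e(x)|^2+|u_\e(y)|^2$ on that set is at least $\alpha\sigma_{d-1}\eta^{-2s}|B_1|$, while $F_\e(u_\e)\le\beta L^{-2s}s[\rchi_{B_1}]^2_{H^s(\Rd)}\to0$. So the cross term is not negligible. You must restrict $x$ to a bounded set and $y$ to $\Rd\setminus B_R$ with $R^{-2s}\to1$. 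This also makes the two symmetric far-field regions disjoint from each other and from the near field, so the factor $2$ from symmetry is legitimate. Then you must estimate the cross term, as the paper does, by Cauchy--Schwarz in $y$, using that the $L^2$ norm of the kernel on $\Rd\setminus B_R$ vanishes. Only then does \cref{lem:long-range} give $\sigma_{d-1}\int\overline{a}^{y}(x/\e)|u_\e|^2\,dx$.

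Second, ``averaging over translates by $\e\Z^d$ within the cube'' gives no energy inequality for a function that is not periodic on the cube. You need the construction of \cref{lem:bis-competitor}:
\begin{enumerate}
\item Truncate at level $M$. This does not increase the near-field energy and keeps $\int\overline{a}^{y}(x/\e)|u^M_\e|^2$ below the corresponding term for $u_\e$.
\item Replace $u^M_\e$ by its mean on a boundary layer of width $\delta$, with $\e\ll\delta\ll\eta$ and $\delta^{-2s}\to1$.
\item Extend periodically with period in $\e\N$.
\item Show that the added energy vanishes. Per unit volume it is of order $M^2\e^{-2s}\delta/\eta+M^2\big((\delta/2)^{-2s}-\eta^{-2s}\big)$, with $\e'$ in place of $\e$ in your treatment of $\lambda=+\infty$.
\end{enumerate}
Without the $L^\infty$ bound, $|u_\e|^2$ may concentrate near the cube boundaries and nothing controls these terms.

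Finally, the ``delicate point'' you raise is not settled by lower semicontinuity. Weak lower semicontinuity of $\|\cdot\|^2_{L^2}$ does not see the oscillating weight. It is settled because $\overline{a}^{y}(\cdot/\e)$ is $\e$-periodic. The far-field quadratic form is therefore invariant under the same translations and, by convexity, decreases under the same average as the medium-range energy. After that, $A_\lambda(\nu_\e,\phi_\e)\ge a_\lambda\nu_\e^2$ holds by definition.
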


The $\Gamma$-convergence counterpart of the Maz'ya--Shaposhnikova formula is recovered as a particular case of our theorem, when we take the coefficient $a$ equal to the constant $1$, in which case the minimizer for the problem in \eqref{eq:a-lambda-0} is $\varphi=0$ independently of $\lambda$, and $a_\lambda=2$. 

Theorem \ref{thm:main} can be proved for Gagliardo seminorms with an arbitrary $p>1$. We restrict 
ourselves to the case $p=2$ so as to focus on the novelties of this work and not to carry another parameter through the proofs. The asymptotic results hold unchanged as in the $p$ version of the Maz'ya--Shaposhnikova formula,
upon defining $\lambda$ as the limit of $\e^{-sp}$ and  using the power $p$ in the definition of $a_\lambda$ (see Theorem \ref{thm:main2}).

For $a$ non-constant, the form of the coefficient $a_\lambda$ highlights a new nonlocal homogenization phenomenon in competition with the effect of the interactions ``with points at infinity'' that drive the Maz'ya--Shaposhnikova limit. This can be explained by comparing Theorem \ref{thm:main} with the Maz'ya--Shaposhnikova limit for a (smooth) function $u$, whose support is contained in a ball $B_{R_0}$. In the latter case, the limit of
$s[u]_{H^s(\Rd)}^2$ as $s\to 0$, using the symmetry between integration in $x$ and $y$ and noting that all interactions in $B_R$ can be neglected, is the same as that of 
$$
\lim_{s\to 0}2s\int_{B_{R_0}}\int_{\Rd\setminus B_{R}}\frac{|u(x)-u(y)|^2}{|x-y|^{d+2s}}\,dx\,dy= \bigg(\lim_{s\to 0} 2s\int_{\Rd\setminus B_{R}}\frac{1}{|\xi|^{d+2s}}\,d\xi\bigg)\int_{B_{R_0}}|u(x)|^2\,dx
$$
for any $R>R_0$. Since the limit is independent of $R$, a direct computation gives the result.

In the presence of an oscillating coefficient, the $\Gamma$-limit can be different from the pointwise limit of $F_\e(u)$ at $u$ fixed. Examining a sequence $u_\e$ weakly converging to $u$, we have two issues:

1) the term given by ``interactions going to infinity'' is now of the form
$$
s\int_{B_{R_0}}\int_{\Rd\setminus B_{R}}
\Big(a\Big(\frac{x}{\e},\frac{y}{\e}\Big)
+ a\Big(\frac{y}{\e},\frac{x}{\e}\Big)\Big)
\frac{|u_\e(x)|^2}{|x-y|^{d+2s}}\,dx\,dy.
$$
By first integrating in $y$, we see that this double integral is asymptotically equivalent as $s\to 0$ to an oscillating integral with a $1$-periodic coefficient. A Riemann--Lebesgue type lemma allows one to see that such an integral is indeed of the form
\begin{equation}\label{infinity-hom}
\frac{\sigma_{d-1}}{2} \int_{B_{R_0}}\bigg(\int_Q \Big(a\Big(\frac{x}{\e},\xi\Big)
+ a\Big(\xi,\frac{x}{\e}\Big)\Big)d\xi\bigg) |u_\e|^2dx.
\end{equation}
A clarifying example is when $a(x,y)= a_0(x)$ (or its symmetric equivalent version $\frac12(a_0(x)+a_0(y))$, for which this integral is 
$$
\frac{\sigma_{d-1}}{2} \int_{B_{R_0}}\Big(a_0\Big(\frac{x}{\e}\Big)+\overline a_0\Big) |u_\e|^2dx,
$$
with $\overline a_0=\int_Q a_0(\xi)d\xi$ representing the contribution ``averaged at infinity'';

2)  due to the presence of oscillations, the interactions at finite distance are not negligible any more. We have an additional term giving a non-vanishing contribution in the limit, which is of the form 
\begin{equation}\label{penalization_term}
s\iint_{\{\e\le |x-y|\le 1\}}a\Big(\frac{x}{\e},\frac{y}{\e}\Big)\frac{|u_\e(x)-u_\e(y)|^2}{|x-y|^{d+2s}}\,dx\,dy.
\end{equation}
The conditions on $s$ and $\e$ defining $\lambda$, and a use of a second Riemann--Lebesgue type lemma give the double-integral contribution to the definition of $a_\lambda$.


\smallskip
It is interesting to compare Theorem \ref {thm:main} with the limit taken as $s\to 1^-$ in the spirit of the Bourgain--Brezis--Mironescu formula \cite{bourgain:hal-00747692}. The asymptotic behaviour of the corresponding $F_\e$, with the scaling factor $s$ replaced by $1-s$ in \eqref{intro:fe}, has been described in \cite{BMS26} (see also \cite{BBD}), showing a behaviour dual to ours: the relevant oscillations in that case are those at scale smaller than $\e$, which are negligible in Theorem \ref {thm:main}. These small-scale oscillations lead to a quadratic functional in $\nabla u$, obtained by homogenizing a quadratic functional with coefficient $a(x,x)$, concentrated on the diagonal; that is, due to interactions at  distance less than $\e$, as opposed to the coefficient $\overline{a}^{\rm c}$ (see Definition \ref{def:oac}) that derives from averaging interactions at infinity. An additional contribution in the BBM limit is given by interactions at scales between $\e$ and $1$ (as in item (2) above). In contrast to the case $s\to 0$, in these interactions only the average of the oscillations matters, and they do not influence the homogenization on a lower scale. The overall limit is a convex combination of the two contributions, with the small-scale contribution now weighted by the limit of $\e^{2(1-s)}$.

We remark that in this paper we do not prove a compactness statement, which amounts to a uniform bound in $\elle2$. This, on a bounded domain $\O$, is proved through a fractional Hardy inequality \cite{CriDLKNP23,DaFaPi1}, but it fails on the whole $\Rd$. On the other hand, if we restrict our attention to functions belonging to $\elleom2$ and extended to zero outside of $\O$, such a compactness statement is available because $a(x,y)\geq\alpha>0$. The proof of the theorem remains unchanged in this constrained case, so that it may be applied to the convergence of minimum problems as in \cite{CriDLKNP23}.

\smallskip
\noindent{\bf Structure of the paper and core technical ideas.}
The whole paper is devoted to the proof of Theorem \ref{thm:main}. 
The key observation is that, due to the oscillations forced by the periodic coefficient, in the limit $s\to 0$ we may only have weak convergence in $L^2$. This led us to study the contribution of term  \eqref{penalization_term}, which is usually negligible in the Maz'ya-Shaposhnikova regime. Indeed, it works precisely as a penalization of the oscillations, with a coefficient due to the mass of the kernel at the scales between $\e$ and $1$,  forcing strong convergence in the case $\lambda=+\infty$.  This is in accordance with the separation-of-scale analysis for functionals $F_\e$, for which 
we have strong $L^2$ compactness when we let $\e\to 0$ first. The techniques used here are similar but complementary to those in \cite{BMS26}. In particular, certain Riemann--Lebesgue type lemmas suit very well the framework of singular kernels of the kind $|x-y|^{-d-2s}$. Indeed, these kernels are smooth away from the diagonal and their contributions at medium range interactions can be precisely quantified. 

Section \ref{sec:preliminaries} is devoted to the collection of several preparatory results on the average behaviour of double integrals of general interest. It should be noted that, since the kernel spreads its mass uniformly over logarithmic scales, most of the lemmas can be applied in both the $\Gamma$-liminf and the $\Gamma$-limsup inequalities. 
Section \ref{sec:liminf} contains the details of the $\Gamma$-liminf portion of Theorem \ref{thm:main}. To prove it, we employ a Fonseca--Müller type of blow up. What is new is that the contributions to reconstruct the cell formula come from two different regions of the space. Therefore, in order to recognize a competitor for the infimization problem defining $a_\lambda$, one has to perform the same modifications both at the medium- and long-range interactions level. 
Remarkably, also in the case of $a$ constant, this allows us to prove the $\Gamma$-liminf inequality without relying on a fractional Hardy inequality (see \cite{CriDLKNP23}).  
Section \ref{sec:limsup} contains the $\Gamma$-limsup argument. The techniques are similar to those of the previous section, and one can see in a more transparent way the role of each contribution (even of those that are negligible). More precisely, in the delicate term (that is, the contribution coming from \eqref{penalization_term}), we make use of the fact that $(d-1)$-dimensional sets have zero fractional capacity for $s<\frac12$ to change the boundary data and periodize the oscillations. Finally, in Section \ref{sec:pneq2} we briefly illustrate the changes for the case of general $p>1$.

\section{Notation and preliminaries}\label{sec:preliminaries}
Throughout the paper $d\geq 1$ is a positive integer. The norm of a point $x\in\Rd$ is denoted by $|x|$; for $R>0$ we set $B_R\coloneqq\{y\in\Rd:|y|<R\}$. The Lebesgue measure of a set $A\subset \Rd$ is denoted by $|A|$. We use the standard notation for Lebesgue and Sobolev spaces.

\smallskip
Let $a\colon\Rd\times\Rd\to\R$ be a Borel function that is $1$-periodic; i.e., 
$$
a(x+k,y+j)= a(x,y) \hbox{ for all }k,j\in\mathbb Z^d, x,y\in\Rd,
$$
and such that there exist $\alpha,\beta$ with $0<\alpha\leq\beta<+\infty$ such that
$$
\alpha\le a(x,y) \le \beta\hbox{ for all } x,y\in\Rd.
$$
We let $Q\coloneqq(0,1)^d$ denote the unit cell of periodicity and define the {\em contracted function}
\begin{equation}\label{def:oac}
    \overline{a}^{\rm c}(x)\coloneqq\int_Q\big(a(x,y)+a(y,x)\big)\,dy.
\end{equation}

Let $\e>0$ and let $s=s(\e)\in(0,1)$ be such that $s\to0$ as $\e\to0$; in all formulas we drop the explicit dependence of $s$ on $\e$. We assume that the limit
\begin{equation}\label{eq:scaling}
\lim_{\e\to0}\e^{-2s}=\lambda\in[1,+\infty]
\end{equation}
exists, which is not restrictive up to extraction of a subsequence.

For every $\e>0$, we define $F_\e\colon L^2(\Rd)\to[0,+\infty]$ as
\begin{equation}\label{eq:Feps}
F_\e(u)\coloneqq s\int_{\Rd}\int_{\Rd} a\Big(\frac{x}{\e},\frac{y}{\e}\Big)\frac{|u(x)-u(y)|^2}{|x-y|^{d+2s}}\,dx\,dy,
\end{equation}
with domain the fractional Sobolev space $H^s(\Rd)$.

\begin{remark}
\label{rem:PiniSantilli}
In logarithmic scale $t=\log|x-y|$, the measure $s|h|^{-d-2s}\,dh$ becomes $\sigma_{d-1}s\,e^{-2st}\,dt$, which is asymptotically uniform as $s\to0^+$. Hence, the scales $t>0$, responsible for the interactions with infinity, carry mass $\frac{\sigma_{d-1}}{2}$, while the scales $\log\e<t<0$, corresponding to the region in \eqref{penalization_term}, carry mass
\[
\frac{\sigma_{d-1}}{2}\big(\e^{-2s}-1\big)\to\frac{\sigma_{d-1}}{2}(\lambda-1).
\]
On the latter scales, the macroscopic profile is essentially frozen, $u(y)\approx u(x)$, whereas $y/\e$ is equidistributed in $Q$. Testing $F_\e$ on $u_\e(x)=u(x)\big(1+\phi(x/\e)\big)$, the corresponding energy is therefore the periodic functional of \cite{PiniSantilli2026}, weighted by $a$ and evaluated on the corrector $\phi$. For a periodic function, as in \cite{PiniSantilli2026}, the two ranges of scales play the same role, since its values at infinity are distributed as its values on the periodicity cell. The separation between the scale $\e$ and the macroscopic scale splits them.
Hence, the coefficient $a_\lambda$ in Theorem \ref{thm:main} interpolates between homogenization at infinity and homogenization on the torus.
\end{remark}

\subsection{The effective coefficients $a_\lambda$}\label{subsec:hom-coeff}
For every $\lambda\in[1,+\infty]$, every $z\in\R$ and every $\phi\in L^2(Q)$, we set
\begin{equation}\label{eq:a-lambda-z}
A_\lambda(z,\phi)\coloneqq\int_Q\overline{a}^{\rm c}(x)|z+\phi(x)|^2\,dx+(\lambda-1)\int_Q\int_Q a(x,y)|\phi(x)-\phi(y)|^2\,dx\,dy,
\end{equation}
where $\overline{a}^{\rm c}$ is defined in \eqref{def:oac}. For $\lambda=+\infty$, we use the convention $(+\infty)\cdot0=0$, so that $A_{+\infty}(z,\phi)$ is finite only if $\phi$ is constant since $a\geq\alpha>0$. We then define
\begin{equation}\label{eq:a-lambda}
a_\lambda\coloneqq\inf\Big\{A_\lambda(1,\phi):\phi\in L^2(Q)\text{ and }\int_Q\phi(x)\,dx=0\Big\};
\end{equation}
that is, formula \eqref{eq:a-lambda-0} or \eqref{eq:a-lambda-infty} in the case $\lambda=+\infty$.
The test functions for the problem defining $a_\lambda$ will be used to construct periodic recovery sequences. The function $\varphi\in L^2(Q)$ will then be identified with its $1$-periodic extension. Note that 
$$
\inf\Big\{A_\lambda(z,\phi):\phi\in L^2(Q)\text{ and }\int_Q\phi(x)\,dx=0\Big\}=a_\lambda z^2
$$
by positive $2$-homogeneity.
The definition of $a_\lambda$ allows us to define $F_\lambda\colon L^2(\Rd)\to[0,+\infty)$ as
\begin{equation}\label{eq:Flambda}
F_\lambda(u)\coloneqq\frac{\sigma_{d-1}}{2}\,a_\lambda\|u\|^2_{L^2(\Rd)},
\end{equation}
where $\sigma_{d-1}\coloneqq\Haus(\Sd)$. We note a slight abuse of notation with respect to $F_\e$.

\begin{remark}[dependence on $\lambda$]\label{rem:a-infty}
The map $\lambda\mapsto a_\lambda$ is continuous, increasing and concave (an infimum of affine functions of $\lambda$), and interpolates between the harmonic and the arithmetic mean of $\overline a^c$.

Indeed, if $\lambda=1$, then it is well known that the minimization gives the harmonic mean of $\overline a^c$; that is
$$
a_1=\Big(\int_Q \frac{1}{\overline a^c(x)}dx\Big)^{-1}.
$$
If $\lambda=+\infty$, then the only test function $\phi\in L^2(Q)$ with $A_{\infty}(1,\phi)<+\infty$ is $\phi=0$. Therefore, 
\begin{equation}\label{eq:a-infty}
a_{+\infty}=\int_Q\int_Q (a(x,y)+a(y,x))\,dx\,dy =\int_Q\overline a^c(x)\,dx=2\int_Q\int_Q a(x,y)\,dx\,dy;
\end{equation}
that is, the average of $\overline a^c$ (or twice the average of $a$).
\end{remark}

\begin{remark}[symmetric coefficients]\label{rem:symmetrx}
The replacement of $a(x,y)$ by its symmetric part $\frac12\big(a(x,y)+a(y,x)\big)$, which satisfies the same bounds and periodicity assumptions, does not change $F_\e$ nor $a_\lambda$. Hence, from now on we assume, without loss of generality, that $a$ is symmetric.
In such a case, we define 
$$\overline{a}^{y}(x)\coloneqq\int_Q a(x,y)\,dy.$$
Then we have $\overline{a}^{\rm c}=2\overline{a}^{y}$,
and we may use $2\overline{a}^{y}$ for the contracted function instead of $\overline{a}^{\rm c}$.
In this case, \eqref{eq:a-lambda-z} reads
\begin{equation*}
A_\lambda(z,\phi)=2\int_Q\overline{a}^{y}(x)|z+\phi(x)|^2\,dx+(\lambda-1)\int_Q\int_Q a(x,y)|\phi(x)-\phi(y)|^2\,dx\,dy.
\end{equation*}

\end{remark}

\begin{remark}[coefficient depending on one variable]\label{rem:separated}
We consider $a(x,y)$ of the form $\frac12\big(a_0(x)+a_0(y)\big)$, with $a_0\colon\Rd\to\R$ a $1$-periodic, bounded Borel function and satisfies $a_0\geq\alpha>0$ a.e. This corresponds (in the non-symmetric setting) to considering the coefficient $a_0(x/\e)$ alone in $F_\e$.
Then the coefficient $a_\lambda$ can be explicitly computed for every $\lambda\in[1,+\infty]$. Note that, by Remark \ref{rem:symmetrx}, this is the symmetric kernel equivalent to the choice $a(x,y)=a_0(x)$ discussed in Section \ref{sec:introduction}. By \eqref{def:oac}, the contracted function is
\begin{equation}\label{eq:separated-ac}
\overline{a}^{\rm c}(x)=a_0(x)+\overline{a}_0,\qquad\text{where }\overline{a}_0\coloneqq\int_Q a_0(\xi)\,d\xi,
\end{equation}
while, expanding the square and using that $\int_Q\phi(x)\,dx=0$, for every test function $\phi$ in \eqref{eq:a-lambda} we have
\begin{equation}\label{eq:separated-nonlocal}
\int_Q\int_Q a(x,y)|\phi(x)-\phi(y)|^2\,dx\,dy=\int_Q\overline{a}^{\rm c}(x)|\phi(x)|^2\,dx;
\end{equation}
that is, the nonlocal term in \eqref{eq:a-lambda-z} is local. Hence, if $\lambda<+\infty$, by \eqref{eq:separated-nonlocal} and after completing the square, \eqref{eq:a-lambda-z} reads
\begin{align*}
A_\lambda(1,\phi)&=\int_Q\overline{a}^{\rm c}(x)\Big(|1+\phi(x)|^2+(\lambda-1)|\phi(x)|^2\Big)dx\\
&=\lambda\int_Q\overline{a}^{\rm c}(x)\Big|\phi(x)+\frac1\lambda\Big|^2dx+\Big(1-\frac1\lambda\Big)\int_Q\overline{a}^{\rm c}(x)\,dx,
\end{align*}
so that \eqref{eq:a-lambda} reduces to minimizing the first term under the constraint $\int_Q\big(\phi(x)+\frac1\lambda\big)dx=\frac1\lambda$. This amounts to saturating a Cauchy--Schwarz inequality, which produces the unique minimizer
\begin{equation}\label{eq:separated-phi}
\phi_\lambda(x)=\frac1\lambda\bigg(\frac{a_1}{\overline{a}^{\rm c}(x)}-1\bigg),\qquad\text{where }a_1=\bigg(\int_Q\frac{dx}{\overline{a}^{\rm c}(x)}\bigg)^{-1}
\end{equation}
is the harmonic mean of $\overline{a}^{\rm c}$, and
\begin{equation}\label{eq:separated-alambda}
a_\lambda=\Big(1-\frac1\lambda\Big)\int_Q\overline{a}^{\rm c}(x)\,dx+\frac1\lambda\,a_1=\Big(1-\frac1\lambda\Big)a_{+\infty}+\frac1\lambda\,a_1,
\end{equation}
where we used \eqref{eq:a-infty} and \eqref{eq:separated-ac}. With the convention $\frac1\lambda=0$ for $\lambda=+\infty$, both \eqref{eq:separated-phi} and \eqref{eq:separated-alambda} hold for $\lambda=+\infty$ also, in accordance with \cref{rem:a-infty}.

Thus, for such kernels, $a_\lambda$ is the convex combination, with parameter $\frac1\lambda$, of the harmonic mean $a_1$ and the arithmetic mean $a_{+\infty}=2\overline{a}_0$ of the contracted function.
\end{remark}

\subsection{Asymptotics of double integrals}\label{subsec:asymptotic}
The analysis of the asymptotic behaviour of $F_\e$ will be based on the study of double integrals defined in sets of different types. To this end, in this section we gather some asymptotic estimates that could be of independent interest for variational problems on fractional Sobolev spaces.

The first result gives an estimate of the contribution when interactions are restricted between two scales.
\begin{lemma}\label{lem:annuli}
For every $s\in(0,1)$, every measurable set $A\sub\Rd$ with $|A|<+\infty$, every $0<r_1<r_2$ and every $u\in L^\infty(\Rd)$, it holds
\begin{equation}\label{eq:annuli}
s\int_A\int_{\{r_1<|x-y|<r_2\}}\frac{|u(x)-u(y)|^2}{|x-y|^{d+2s}}\,dy\,dx\leq 2\sigma_{d-1}\|u\|^2_{L^\infty(\Rd)}|A|\big(r_1^{-2s}-r_2^{-2s}\big),
\end{equation}
where $\sigma_{d-1}$ is as in \eqref{eq:Flambda}. In particular, if $(u_\e)\sub L^\infty(\Rd)$ is a bounded sequence and $r_1\coloneqq r_1(\e)$, $r_2\coloneqq r_2(\e)$ satisfy $0<r_1<r_2$ and
\begin{equation*}
\lim_{\e\to0}r_1^{-2s}=\lim_{\e\to0}r_2^{-2s}\in[0,+\infty),
\end{equation*}
then
\begin{equation}\label{eq:annu-0}
\lim_{\e\to0}s\int_A\int_{\{r_1<|x-y|<r_2\}}\frac{|u_\e(x)-u_\e(y)|^2}{|x-y|^{d+2s}}\,dy\,dx=0.
\end{equation}
\end{lemma}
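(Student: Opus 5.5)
The plan is to bound the integrand pointwise and then integrate the kernel explicitly in polar coordinates. Since $u\in L^\infty(\Rd)$, for almost every $x,y$ we have $|u(x)-u(y)|^2\le (|u(x)|+|u(y)|)^2\le 4\|u\|^2_{L^\infty(\Rd)}$. Therefore the left-hand side of \eqref{eq:annuli} is bounded by
\[
4\|u\|^2_{L^\infty(\Rd)}\, s\int_A\int_{\{r_1<|x-y|<r_2\}}\frac{dy}{|x-y|^{d+2s}}\,dx .
\]
The inner integral does not depend on $x$. We change variables $h=y-x$ and pass to polar coordinates:
\[
\int_{\{r_1<|h|<r_2\}}\frac{dh}{|h|^{d+2s}}=\sigma_{d-1}\int_{r_1}^{r_2}\rho^{-1-2s}\,d\rho=\frac{\sigma_{d-1}}{2s}\big(r_1^{-2s}-r_2^{-2s}\big).
\]
Multiplying by $s$ and integrating over $A$ gives exactly $2\sigma_{d-1}\|u\|^2_{L^\infty(\Rd)}|A|(r_1^{-2s}-r_2^{-2s})$, which is \eqref{eq:annuli}. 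Measurability is not an issue, since the integrand is nonnegative and Tonelli's theorem applies.

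For \eqref{eq:annu-0}, apply \eqref{eq:annuli} to $u_\e$ with $r_1=r_1(\e)$, $r_2=r_2(\e)$. Set $M\coloneqq\sup_\e\|u_\e\|_{L^\infty(\Rd)}<+\infty$. Then the quantity in \eqref{eq:annu-0} is nonnegative and at most $2\sigma_{d-1}M^2|A|\big(r_1^{-2s}-r_2^{-2s}\big)$. By assumption both $r_1^{-2s}$ and $r_2^{-2s}$ converge to the same finite limit, so their difference tends to $0$, and the claim follows.

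No step is a genuine obstacle. The only point needing care is the finiteness of the common limit, which guarantees that the difference of the two terms tends to zero rather than being an indeterminate $\infty-\infty$.
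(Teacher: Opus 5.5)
Your proposal is correct and follows the paper's proof: the pointwise bound $|u(x)-u(y)|^2\le 4\|u\|^2_{L^\infty(\Rd)}$, integration of the kernel in polar coordinates over the annulus giving exactly $2\sigma_{d-1}\|u\|^2_{L^\infty(\Rd)}|A|(r_1^{-2s}-r_2^{-2s})$, and then the uniform $L^\infty$ bound on $(u_\e)$ for the limit. You also correctly note that the common limit must be finite so that $r_1^{-2s}-r_2^{-2s}\to0$.
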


\begin{proof}
Using $|u(x)-u(y)|^2\leq4\|u\|^2_{L^\infty(\Rd)}$ and integrating in polar coordinates, we obtain
\begin{equation*}
s\int_A\int_{\{r_1<|x-y|<r_2\}}\frac{|u(x)-u(y)|^2}{|x-y|^{d+2s}}\,dy\,dx\leq4s\|u\|^2_{L^\infty(\Rd)}|A|\,\sigma_{d-1}\int_{r_1}^{r_2}t^{-1-2s}\,dt,
\end{equation*}
which gives \eqref{eq:annuli}.

Applying \eqref{eq:annuli},  with $u=u_\e$, we see that the quantity in the limit in \eqref{eq:annu-0} is bounded by $2\sigma_{d-1}\sup_\e\|u_\e\|^2_{L^\infty(\Rd)}|A|\big(r_1^{-2s}-r_2^{-2s}\big)$, which vanishes as $\e\to0$.
\end{proof}

The following lemma regards the contribution of interactions in a neighbourhood of a set $\Sigma$ of finite $(d-1)$-dimensional Minkowski content.
\begin{lemma}\label{lem:jump}
Let $\Sigma\sub\Rd$ be a closed set such that there exist $C,\ell>0$ such that
\begin{equation}\label{eq:minkowski}
\big|\{x\in\Rd:\operatorname{dist}(x,\Sigma)<\tau\}\big|\leq C\tau\qquad\text{for every }\tau\in(0,\ell].
\end{equation}
Then, for every $s\in(0,1/2)$ and every $u\in L^\infty(\Rd)$, it holds
\begin{equation}\label{eq:jump}
s\iint_{\{(x,y)\in\Rd\times\Rd:\,\operatorname{dist}(x,\Sigma)\leq|x-y|<\ell\}}\frac{|u(x)-u(y)|^2}{|x-y|^{d+2s}}\,dy\,dx\leq4\sigma_{d-1}C\|u\|^2_{L^\infty(\Rd)}\,\frac{s\,\ell^{1-2s}}{1-2s}.
\end{equation}
\end{lemma}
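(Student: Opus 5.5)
The plan is to bound the integrand crudely and then exchange the order of integration, using the Minkowski bound \eqref{eq:minkowski} to control the measure of the set of points $x$ that are admissible for a given distance $r=|x-y|$. As in the proof of Lemma~\ref{lem:annuli}, we first use $|u(x)-u(y)|^2\le 4\|u\|^2_{L^\infty(\Rd)}$, which reduces the left-hand side of \eqref{eq:jump} to
\[
4s\|u\|^2_{L^\infty(\Rd)}\int_{\{x:\operatorname{dist}(x,\Sigma)<\ell\}}\int_{\{y:\operatorname{dist}(x,\Sigma)\le|x-y|<\ell\}}\frac{dy}{|x-y|^{d+2s}}\,dx .
\]
Note that only points with $\operatorname{dist}(x,\Sigma)<\ell$ contribute, so the $x$-integration takes place in a set of finite measure.

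Next, we write $y=x+h$ and pass to polar coordinates $h=r\theta$, $r\in(0,\ell)$, $\theta\in\Sd$. For fixed $x$ the inner integral equals $\sigma_{d-1}\int_{\operatorname{dist}(x,\Sigma)}^{\ell}r^{-1-2s}\,dr$. By Tonelli's theorem we swap the order of integration, obtaining
\[
4s\sigma_{d-1}\|u\|^2_{L^\infty(\Rd)}\int_0^\ell r^{-1-2s}\,\big|\{x:\operatorname{dist}(x,\Sigma)\le r\}\big|\,dr .
\]
The set $\{\operatorname{dist}(x,\Sigma)\le r\}$ is contained in $\{\operatorname{dist}(x,\Sigma)<\tau\}$ for every $\tau\in(r,\ell]$. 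Letting $\tau\downarrow r$ in \eqref{eq:minkowski} (and noting that the boundary value $r=\ell$ is irrelevant since it has measure zero in $dr$), we get the bound $Cr$. Hence the $r$-integral is at most $C\int_0^\ell r^{-2s}\,dr=C\ell^{1-2s}/(1-2s)$, which is finite precisely because $s<1/2$. This yields \eqref{eq:jump} with exactly the constant $4\sigma_{d-1}C$.

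The only delicate points are the measurability needed for Tonelli's theorem and the closed-versus-open neighbourhood issue. Measurability holds because $\operatorname{dist}(\cdot,\Sigma)$ is continuous, so the integration domain is Borel. The neighbourhood issue is resolved by the monotone limit described above. The condition $s<1/2$ is exactly what makes $r^{-2s}$ integrable at $0$; this is where the $(d-1)$-dimensional structure of $\Sigma$, encoded in \eqref{eq:minkowski}, enters.
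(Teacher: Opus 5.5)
Your proof is correct and is essentially the paper's argument. The paper also bounds $|u(x)-u(y)|^2\le 4\|u\|^2_{L^\infty(\Rd)}$ and integrates in polar coordinates to obtain $2\sigma_{d-1}\|u\|^2_{L^\infty(\Rd)}\,f(\operatorname{dist}(x,\Sigma))$ with $f(t)=(t^{-2s}-\ell^{-2s})_+$, then applies Cavalieri's principle with \eqref{eq:minkowski}; this is your Tonelli swap in the radial variable after the change of variables $\xi=f(r)$, and it gives the same constant $4\sigma_{d-1}C$.
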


\begin{proof}
Set $\tau(x)\coloneqq\operatorname{dist}(x,\Sigma)$ and $f(t)\coloneqq\big(t^{-2s}-\ell^{-2s}\big)_+$ for $t>0$. Note that $|\Sigma|=0$ by \eqref{eq:minkowski}. For every $x\notin\Sigma$, using $|u(x)-u(y)|^2\leq4\|u\|^2_{L^\infty(\Rd)}$ and integrating in polar coordinates, we get
\begin{eqnarray*}
s\int_{\{\tau(x)\leq|x-y|<\ell\}}\frac{|u(x)-u(y)|^2}{|x-y|^{d+2s}}dy&\leq& 4\|u\|^2_{L^\infty(\Rd)}\,\sigma_{d-1}s\int_{\tau(x)}^{\ell}t^{-1-2s}\,dt
\\&=&2\sigma_{d-1}\|u\|^2_{L^\infty(\Rd)}f\big(\tau(x)\big),
\end{eqnarray*}
where the integral is understood to be $0$ if $\tau(x)\geq\ell$. Since $f$ is decreasing on $(0,\ell]$ and vanishes on $[\ell,+\infty)$, by Cavalieri's principle and \eqref{eq:minkowski} we have
\begin{align*}
\int_{\Rd}f\big(\tau(x)\big)\,dx&=\int_0^{+\infty}\big|\{x\in\Rd:\tau(x)<f^{-1}(\xi)\}\big|\,d\xi\\
&\leq C\int_0^{+\infty}f^{-1}(\xi)\,d\xi\\
&=C\int_0^\ell f(t)\,dt=2C\,\frac{s\,\ell^{1-2s}}{1-2s},
\end{align*}
where $f^{-1}$ denotes the inverse of $f$ restricted to $(0,\ell)$. Integrating the previous estimate in $x$ gives \eqref{eq:jump}.
\end{proof}

\subsection{Averaging interactions for singular kernels}\label{subsec:riemann-lebesgue}

This section is devoted to the proof of two Riemann--Lebesgue type lemmas. In some sense, they are dual to \cite[Lemma~2.5]{BMS26}, where the case $s\to 1$ is treated, and can be proven in a similar way.

Given a Borel function $b\colon\Rd\times\Rd\to\R$ which is $1$-periodic, we set
\begin{equation*}
\overline{b}\coloneqq\int_{Q}\int_{Q}b(x,y)\,dx\,dy.
\end{equation*}
For $\e>0$ and $k\in\Z^d$ we let $Q_{\e,k}\coloneqq\e\big(k+[0,1)^d\big)$ denote the cubes of the partition of $\Rd$ induced by the lattice $\e\Z^d$ and, for $A\sub\Rd$, we set $\partial_\e A\coloneqq\{x\in\Rd:\operatorname{dist}(x,\partial A)\leq\sqrt d\,\e\}$.

\begin{lemma}[Averaging interactions at scales larger than $\e$]\label{lem:averaging}
Let $b\in L^1_{\rm loc}(\Rd\times\Rd)$ be $1$-periodic.
Then there exists $C=C_d>0$ such that
\begin{multline}\label{eq:averaging}
\bigg|s\int_A\int_{\{c\e<|x-y|<\ell\}}\frac{b\big(\frac{x}{\e},\frac{y}{\e}\big)}{|x-y|^{d+2s}}\,dy\,dx-\overline{b}\,\frac{\sigma_{d-1}}{2}\big((c\e)^{-2s}-\ell^{-2s}\big)|A|\bigg|\\
\leq C\|b\|_{L^1(Q^2)}\,(c\e)^{-2s}\Big(s\big(1+c^{-1}\big)^{d+4}|A|+\big(1+c^{-1}\big)^{d}|\partial_{2\e}A|\Big)
\end{multline}
for every $s\in(0,1)$, every $c,\ell>0$, every $\e>0$ such that $c\e<\ell$ and $4\sqrt d\,\e\leq\ell$, and every Borel set $A\sub\Rd$ with $|A|<+\infty$.
\end{lemma}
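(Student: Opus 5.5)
We will prove \eqref{eq:averaging} by discretizing both variables on the lattice $\e\Z^d$ and replacing the kernel on each pair of cells by a constant. Write $K(h)\coloneqq|h|^{-d-2s}$ and $D\coloneqq\{(x,y):c\e<|x-y|<\ell\}$.

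\textbf{Step 1 (reduction to cells).} Let $\mathcal K_A$ be the set of $k\in\Z^d$ such that $Q_{\e,k}\sub A$. Cubes meeting $A$ but not contained in it lie in $\partial_{2\e}A$ (their diameter is $\sqrt d\,\e$). Their contribution, and the contribution of $A\setminus\bigcup_{\mathcal K_A}Q_{\e,k}$, is bounded by
\[
s\int_{\partial_{2\e}A}\int_{D_x}|b(x/\e,y/\e)|\,K(x-y)\,dy\,dx ,
\]
where $D_x$ is the $x$-section of $D$. Split $D_x$ into $\e$-cells $Q_{\e,j}$ and use $\sup_{Q_{\e,j}}K\le C(1+c^{-1})^{d+2s}\inf_{Q_{\e,j}}K$ for cells at distance at least $c\e$. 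Periodicity gives $\int_{Q_{\e,j}}|b(x/\e,y/\e)|\,dy=\e^d\int_Q|b(x/\e,\eta)|\,d\eta$. Integrating in $x$ over cells, the bound becomes $C(1+c^{-1})^d\|b\|_{L^1(Q^2)}|\partial_{2\e}A|\,s\int_{|h|>c\e/2}K(h)\,dh$. This is the boundary term, since $s\int_{|h|>c\e/2}K\le C(c\e)^{-2s}$.

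\textbf{Step 2 (freezing the kernel on pairs of cells).} For $k\in\mathcal K_A$ and $j\in\Z^d$, compare $\iint_{Q_{\e,k}\times Q_{\e,j}}b(x/\e,y/\e)K(x-y)$ with $K(\e(k-j))\,\e^{2d}\,\overline b$. The key identity is that, by periodicity,
\[
\iint_{Q_{\e,k}\times Q_{\e,j}}b(x/\e,y/\e)\,dx\,dy=\e^{2d}\,\overline b .
\]
Hence the error on a pair of cells is at most $\e^{2d}\|b\|_{L^1(Q^2)}\,\mathrm{osc}_{Q_{\e,k}\times Q_{\e,j}}K(x-y)$. Moreover $|\nabla K(h)|\le (d+2)|h|^{-d-1-2s}$.

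\textbf{Step 3 (separating good and bad pairs).} Treat separately the pairs where $|\e(k-j)|$ is comparable to $c\e$ or to $\ell$, where $D$ cuts the product of cells; I expect this to be the main obstacle.
\begin{itemize}
\item \emph{Pairs with $Q_{\e,k}\times Q_{\e,j}\sub D$.} The oscillation is at most $C\e|\e(k-j)|^{-d-1-2s}(1+c^{-1})^{d+2}$. Summing over $j$ with $|\e(k-j)|\gtrsim c\e$ gives $C\e^{d}(1+c^{-1})^{d+2}\,\e\int_{|h|>c\e/2}|h|^{-d-1-2s}\,dh\le C\e^d(1+c^{-1})^{d+3}(c\e)^{-2s}$. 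Multiplied by $s$ and summed over $k$, this gives the $s(1+c^{-1})^{d+4}|A|(c\e)^{-2s}$ term; the extra powers absorb the case $c<1$, where many cells fall in the inner annulus.
\item \emph{Pairs that straddle the inner sphere $|x-y|=c\e$.} Their number is $O((1+c^{-1})^{-1}\cdots)$, and $K\le C(c\e)^{-d-2s}$ on them. The cells involved are those within $\sqrt d\,\e$ of that sphere, of measure at most $C\e^d(c+1)^{d}$. This gives a contribution $\le Cs\,\e^d(1+c^{-1})^{d}(c\e)^{-2s}$ per $k$, again of the volume type.
\item \emph{Pairs straddling the outer sphere $|x-y|=\ell$.} Since $4\sqrt d\,\e\le\ell$, they contribute $\le Cs\,\e^{d+1}\ell^{-1-2s}$ per $k$, which is dominated by $(c\e)^{-2s}s\,\e^d$.
\end{itemize}

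\textbf{Step 4 (summing).} Finally, $\sum_j K(\e(k-j))\e^{d}$ over the admissible $j$ is compared with $\int_{c\e<|h|<\ell}K=\frac{\sigma_{d-1}}{2s}\big((c\e)^{-2s}-\ell^{-2s}\big)$ by the same Riemann-sum estimate. Summing over $k\in\mathcal K_A$ uses $\e^d\#\mathcal K_A\le|A|$ and $|A|-\e^d\#\mathcal K_A\le|\partial_{2\e}A|$. Collecting all terms yields \eqref{eq:averaging}.
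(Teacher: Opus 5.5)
Your proposal is correct and follows essentially the paper's argument. It uses the same three-way splitting: product cells $Q_{\e,k}\times Q_{\e,j}$ contained in the annulus with $Q_{\e,k}\subseteq A$ (periodicity plus the mean-value theorem, giving the $s(1+c^{-1})^{d+4}|A|$ term), cells near $\partial A$, and pairs straddling $|x-y|=c\e$ or $|x-y|=\ell$. The only difference is cosmetic: you freeze the kernel and then compare a Riemann sum with $\int_{c\e<|h|<\ell}|h|^{-d-2s}\,dh$. The paper instead subtracts $\overline b$ from $b$ directly, so the cancellation $\iint_{Q_{\e,k}\times Q_{\e,j}}(b-\overline b)=0$ performs both comparisons at once.

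One bookkeeping caveat concerns Step 1. The crude ratio $\sup K\le C(1+c^{-1})^{d+2s}\inf K$ also does not cover the cells closer than $c\e$ to $x$. More importantly, it only yields $(1+c^{-1})^{d+2s}$ in the boundary term, not the stated $(1+c^{-1})^{d}$. To get the exact power, bound the kernel on each cell by $\max\{c\e,|x-y|-2\sqrt d\,\e\}^{-d-2s}$ and integrate in polar coordinates (substituting $t\mapsto t-2\sqrt d\,\e$), as the paper does.
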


\begin{proof}
In the following, $C=C_d>0$ may change from line to line. Set
\begin{equation*}
\kappa(x,y)\coloneqq\frac{s}{|x-y|^{d+2s}},\qquad\Lambda\coloneqq\big(A\times\Rd\big)\cap\{c\e<|x-y|<\ell\}.
\end{equation*}
Integrating in polar coordinates, we get
\begin{equation}\label{eq:polar-kappa}
\int_\Lambda\kappa(x,y)\,dy\,dx=\frac{\sigma_{d-1}}{2}\big((c\e)^{-2s}-\ell^{-2s}\big)|A|,
\end{equation}
so that the left-hand side of \eqref{eq:averaging} equals the absolute value of
\begin{equation*}
\int_\Lambda\Big(b\Big(\frac{x}{\e},\frac{y}{\e}\Big)-\overline{b}\Big)\kappa(x,y)\,dy\,dx.
\end{equation*}
We cover $\Lambda$ by the sets
\begin{align*}
\Lambda_{1,\e}&\coloneqq\bigcup\big\{Q_{\e,j}\times Q_{\e,k}:Q_{\e,j}\times Q_{\e,k}\sub\Lambda,\ Q_{\e,j}\sub A\big\},\\
\Lambda_{2,\e}&\coloneqq\Lambda\cap\big(\partial_\e A\times\Rd\big),\\
\Lambda_{3,\e}&\coloneqq\Lambda\setminus\big(\Lambda_{1,\e}\cup\Lambda_{2,\e}\big).
\end{align*} By the $1$-periodicity of $b$, for all $j,k\in\Z^d$ we have
\begin{equation}\label{eq:cube-L1}
\int_{Q_{\e,j}}\int_{Q_{\e,k}}\Big|b\Big(\frac{x}{\e},\frac{y}{\e}\Big)-\overline{b}\Big|\,dy\,dx=\e^{2d}\,\|b-\overline{b}\|_{L^1(Q^2)}\leq2\e^{2d}\|b\|_{L^1(Q^2)}.
\end{equation}
Since the sets $\Lambda_{2,\e}$ and $\Lambda_{3,\e}$ are not unions of products of cubes, we estimate the corresponding contributions by covering them with products of cubes, and using \eqref{eq:cube-L1} on each of them.

Let $Q_{\e,j}\times Q_{\e,k}\sub\Lambda_{1,\e}$, fix $(\overline x,\overline y)\in Q_{\e,j}\times Q_{\e,k}$, and set $$m_{jk}\coloneqq\inf\{|x-y|^{-d-1-2s}:(x,y)\in Q_{\e,j}\times Q_{\e,k}\}.$$ Since $b$ is $1$-periodic, we have
\begin{equation*}
\int_{Q_{\e,j}}\int_{Q_{\e,k}}\Big(b\Big(\frac{x}{\e},\frac{y}{\e}\Big)-\overline{b}\Big)\kappa(x,y)\,dy\,dx=\int_{Q_{\e,j}}\int_{Q_{\e,k}}\Big(b\Big(\frac{x}{\e},\frac{y}{\e}\Big)-\overline{b}\Big)\big(\kappa(x,y)-\kappa(\overline x,\overline y)\big)\,dy\,dx.
\end{equation*}
For $x\neq y$ we have $|\nabla_{(x,y)}\kappa(x,y)|\leq\sqrt2(d+2)s|x-y|^{-d-1-2s}$. Moreover, any two points $(x,y),(x',y')\in Q_{\e,j}\times Q_{\e,k}$ satisfy $|x'-y'|\geq c\e$ and
\begin{equation*}
|x-y|\leq|x'-y'|+2\sqrt d\,\e\leq\big(1+2\sqrt d\,c^{-1}\big)|x'-y'|\leq2\sqrt d\big(1+c^{-1}\big)|x'-y'|,
\end{equation*}
so that, by the mean-value theorem,
\begin{equation*}
|\kappa(x,y)-\kappa(\overline x,\overline y)|\leq C\big(1+c^{-1}\big)^{d+3}\e s\,m_{jk}\qquad\text{for }(x,y)\in Q_{\e,j}\times Q_{\e,k},
\end{equation*}
where we used that $d+1+2s<d+3$ since $s<1$. Hence, by \eqref{eq:cube-L1} and using $\e^{2d}m_{jk}\leq\int_{Q_{\e,j}}\int_{Q_{\e,k}}|x-y|^{-d-1-2s}\,dy\,dx$, summing over the cubes of $\Lambda_{1,\e}$ and integrating in polar coordinates we obtain
\begin{align*}
\bigg|\int_{\Lambda_{1,\e}}\Big(b\Big(\frac{x}{\e},\frac{y}{\e}\Big)-\overline{b}\Big)\kappa(x,y)\,dy\,dx\bigg|&\leq C\big(1+c^{-1}\big)^{d+3}\e s\|b\|_{L^1(Q^2)}|A|\,\sigma_{d-1}\int_{c\e}^{+\infty}t^{-2-2s}\,dt\\
&\leq C\big(1+c^{-1}\big)^{d+4}\|b\|_{L^1(Q^2)}\,s\,(c\e)^{-2s}|A|.
\end{align*}

Let now $Q_{\e,j}\times Q_{\e,k}$ intersect $\Lambda_{2,\e}\cup\Lambda_{3,\e}$. Since $\Lambda\sub\{|x-y|>c\e\}$ and $\big||x-y|-|x'-y'|\big|\leq2\sqrt d\,\e$ for $(x,y),(x',y')\in Q_{\e,j}\times Q_{\e,k}$, we have
\begin{equation}\label{eq:sup-kappa}
\sup_{(Q_{\e,j}\times Q_{\e,k})\cap\Lambda}\kappa\leq s\,\max\big\{c\e,|x-y|-2\sqrt d\,\e\big\}^{-d-2s}\qquad\text{for every }(x,y)\in Q_{\e,j}\times Q_{\e,k}.
\end{equation}

If $Q_{\e,j}\times Q_{\e,k}$ intersects $\Lambda_{2,\e}$, then $Q_{\e,j}$ intersects $\partial_\e A$, so that $Q_{\e,j}\sub\partial_{2\e}A$; moreover, $|x-y|<\ell+2\sqrt d\,\e$ on $Q_{\e,j}\times Q_{\e,k}$. Hence, by \eqref{eq:cube-L1} and \eqref{eq:sup-kappa}, the contribution of $\Lambda_{2,\e}$ is at most
\begin{equation*}
2\|b\|_{L^1(Q^2)}\int_{\partial_{2\e}A}\int_{\{|x-y|<\ell+2\sqrt d\,\e\}}s\,\max\big\{c\e,|x-y|-2\sqrt d\,\e\big\}^{-d-2s}\,dy\,dx.
\end{equation*}
Integrating in polar coordinates, and using that $t\leq(1+2\sqrt d\,c^{-1})(t-2\sqrt d\,\e)$ for $t\geq(c+2\sqrt d)\e$, the inner integral is bounded by
\begin{equation*}
\sigma_{d-1}s\,(c\e)^{-d-2s}\frac{\big((c+2\sqrt d)\e\big)^d}{d}+\big(1+2\sqrt d\,c^{-1}\big)^{d-1}\sigma_{d-1}s\int_{c\e}^{+\infty}t^{-1-2s}\,dt\leq C\big(1+c^{-1}\big)^{d}(c\e)^{-2s},
\end{equation*}
so that the contribution of $\Lambda_{2,\e}$ is bounded by $C(1+c^{-1})^d\|b\|_{L^1(Q^2)}(c\e)^{-2s}|\partial_{2\e}A|$.

If $Q_{\e,j}\times Q_{\e,k}$ intersects $\Lambda_{3,\e}$ at a point $(x,y)$, then $x\in A$ and $\operatorname{dist}(x,\partial A)>\sqrt d\,\e$, so that $Q_{\e,j}\sub A$. Since $(x,y)\notin\Lambda_{1,\e}$, the set $Q_{\e,j}\times Q_{\e,k}$ contains a pair $(x',y')$ with $|x'-y'|\leq c\e$ or $|x'-y'|\geq\ell$. As $\big||x-y|-|x'-y'|\big|\leq2\sqrt d\,\e$ and $|x-y|<\ell$, we get that either $|x-y|\leq(c+2\sqrt d)\e$ on $Q_{\e,j}\times Q_{\e,k}$, or $\ell-2\sqrt d\,\e\leq|x-y|\leq\ell+2\sqrt d\,\e$ on $Q_{\e,j}\times Q_{\e,k}$. In the first case, the supremum in \eqref{eq:sup-kappa} is at most $s(c\e)^{-d-2s}$; in the second case, since $\ell-2\sqrt d\,\e\geq\ell/2$, it is at most $s(\ell/2)^{-d-2s}$. Hence, by \eqref{eq:cube-L1}, the contribution of $\Lambda_{3,\e}$ is at most
\begin{align*}
&2\|b\|_{L^1(Q^2)}|A|\Big(s(c\e)^{-d-2s}\,\omega_d\big((c+2\sqrt d)\e\big)^d+s(\ell/2)^{-d-2s}\,\sigma_{d-1}\,4\sqrt d\,\e\,(2\ell)^{d-1}\Big)\\
&\qquad\leq C\|b\|_{L^1(Q^2)}\,s\Big(\big(1+c^{-1}\big)^d(c\e)^{-2s}+\frac{\e}{\ell}\ell^{-2s}\Big)|A|,
\end{align*}
where $\omega_d$ denotes the volume of the unit ball. Since $\e\leq\ell$ and $\ell^{-2s}\leq(c\e)^{-2s}$, the right-hand side is bounded by $C(1+c^{-1})^{d+4}\|b\|_{L^1(Q^2)}\,s\,(c\e)^{-2s}|A|$, and \eqref{eq:averaging} follows.
\end{proof}

\begin{obs}\label{obs:averaging-Linfty}
Note that, if $b\in L^\infty(Q^2)$, since $\|b\|_{L^1(Q^2)}\leq\|b\|_{L^\infty(Q^2)}$, estimate \eqref{eq:averaging} holds a fortiori with $\|b\|_{L^\infty(Q^2)}$ in place of $\|b\|_{L^1(Q^2)}$.
\end{obs}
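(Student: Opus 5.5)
The proof is a direct monotonicity argument applied to the right-hand side of \eqref{eq:averaging}, with no need to revisit the proof of Lemma~\ref{lem:averaging}. The plan has two short steps.

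First, we observe that every $b\in L^\infty(Q^2)$, extended periodically, is bounded on every compact subset of $\Rd\times\Rd$. Hence it belongs to $L^1_{\rm loc}(\Rd\times\Rd)$, so $b$ satisfies the hypotheses of Lemma~\ref{lem:averaging} and \eqref{eq:averaging} holds for it as stated, with constant $C=C_d$.

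Second, we compare the two norms. Since $Q^2=(0,1)^{2d}$ has Lebesgue measure $1$, we have
\[
\|b\|_{L^1(Q^2)}=\int_Q\int_Q|b(x,y)|\,dx\,dy\leq\|b\|_{L^\infty(Q^2)}\,|Q^2|=\|b\|_{L^\infty(Q^2)}.
\]
The right-hand side of \eqref{eq:averaging} is the product of the norm with the nonnegative factor
\[
C\,(c\e)^{-2s}\Big(s(1+c^{-1})^{d+4}|A|+(1+c^{-1})^d|\partial_{2\e}A|\Big).
\]
This factor does not depend on $b$, so the right-hand side is nondecreasing in the norm. Therefore replacing $\|b\|_{L^1(Q^2)}$ by $\|b\|_{L^\infty(Q^2)}$ only enlarges the bound, and the estimate remains true with the same constant $C_d$ and under the same restrictions on $s,c,\ell,\e$ and $A$.

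There is no genuine obstacle. The only point to check is that the left-hand side of \eqref{eq:averaging} involves $b$ only through its values and through $\overline b$, neither of which depends on which norm is used. The observation is thus a restatement suited to the bounded coefficients $a$ and their products with bounded functions that appear later, where an $L^\infty$ bound is the natural one to use.
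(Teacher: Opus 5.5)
Your proposal is correct and follows the paper's reasoning: $|Q^2|=1$ gives $\|b\|_{L^1(Q^2)}\le\|b\|_{L^\infty(Q^2)}$, and the right-hand side of \eqref{eq:averaging} is this norm multiplied by a nonnegative factor independent of $b$, so the substitution only enlarges the bound. Your extra check that a periodically extended $L^\infty$ function lies in $L^1_{\rm loc}(\Rd\times\Rd)$ is the applicability condition for Lemma~\ref{lem:averaging} that the paper leaves implicit.
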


\begin{lemma}[Averaging long-range interactions]\label{lem:long-range}
Let $s\in(0,1)$ and $\e>0$. Let $\overline{a}^{y}$ be as in Remark \ref{rem:symmetrx}. Then there exists $C=C_d>0$ such that
\begin{multline}\label{eq:long-range}
\bigg|s\int_A\int_{\Rd\setminus B_R}a\Big(\frac{x}{\e},\frac{y}{\e}\Big)\frac{|u(x)|^2}{|x-y|^{d+2s}}\,dy\,dx-\frac{\sigma_{d-1}}{2}R^{-2s}\int_A\overline{a}^{y}\Big(\frac{x}{\e}\Big)|u(x)|^2\,dx\bigg|\\
\leq C\beta\,s\,\frac{\rho+\e}{R}R^{-2s}\|u\|^2_{L^2(A)}
\end{multline}
for every $\rho>0$, every Borel set $A\sub B_\rho$, every $u\in L^2(A)$ and every $R\geq\max\{2\rho,\sqrt d\,\e\}$.
\end{lemma}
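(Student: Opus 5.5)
The plan is to fix $x\in A$ and compare the inner integral in $y$ with its averaged version, keeping the prefactor $|u(x)|^2$ frozen. The claim then reduces to a pointwise estimate in $x$, uniform over $x\in B_\rho$:
\begin{equation*}
\bigg|s\int_{\Rd\setminus B_R}a\Big(\frac{x}{\e},\frac{y}{\e}\Big)\frac{dy}{|x-y|^{d+2s}}-\frac{\sigma_{d-1}}{2}R^{-2s}\,\overline{a}^{y}\Big(\frac{x}{\e}\Big)\bigg|\leq C\beta\,s\,\frac{\rho+\e}{R}R^{-2s}.
\end{equation*}
Multiplying by $|u(x)|^2$ and integrating over $A$ gives \eqref{eq:long-range}. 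The first step is to center the kernel at the origin. By polar coordinates, $s\int_{\Rd\setminus B_R}|y|^{-d-2s}\,dy=\frac{\sigma_{d-1}}{2}R^{-2s}$. For $|x|\le\rho\le R/2$ and $|y|\ge R$, the mean-value theorem gives
\[
\big||x-y|^{-d-2s}-|y|^{-d-2s}\big|\le C\rho|y|^{-d-1-2s}.
\]
Using $a\le\beta$, replacing $|x-y|$ by $|y|$ therefore costs at most $C\beta s\rho\int_{|y|>R}|y|^{-d-1-2s}\,dy\le C\beta s\frac{\rho}{R}R^{-2s}$. This accounts for the $\rho/R$ part of the error.

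The second step handles the oscillation in $y$ with $x$ fixed, since $\xi=x/\e$ is then a parameter. Write $g(y)=a(\xi,y/\e)-\overline{a}^{y}(\xi)$. This function is $\e$-periodic with zero mean on every cube $Q_{\e,k}$, and $|g|\le\beta$ (indeed $\le 2\beta$, which only changes the constant). The task is to bound $s\int_{\Rd\setminus B_R}g(y)|y|^{-d-2s}\,dy$. As in the proof of Lemma \ref{lem:averaging}, I split $\Rd\setminus B_R$ into the cubes $Q_{\e,k}$ contained in it and the cubes meeting $\partial B_R$.

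On each cube $Q_{\e,k}$ fully contained in $\Rd\setminus B_R$, the zero mean allows me to subtract the value of the kernel at a point of the cube. The gradient bound $|\nabla|y|^{-d-2s}|\le (d+2)|y|^{-d-1-2s}$ then gives a contribution of at most $C\beta\e\int|y|^{-d-1-2s}$ over a slightly enlarged region. This is admissible because $R\ge\sqrt d\,\e$, so $|y|$ is comparable on each cube. Summing over these cubes yields $C\beta s\frac{\e}{R}R^{-2s}$.

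The boundary cubes are the ones I expect to need the most care. They lie in the shell $R-\sqrt d\e\le|y|\le R+\sqrt d\e$, whose measure is at most $C\e R^{d-1}$. This needs $R\ge\sqrt d\,\e$; if $R$ is close to $\sqrt d\,\e$, I would use $R\ge 2\sqrt d\e$ or enlarge $C$ so that the inner radius stays comparable to $R$. On the shell, $|g|\,s|y|^{-d-2s}\le C\beta sR^{-d-2s}$, so the boundary contribution is at most $C\beta s\frac{\e}{R}R^{-2s}$.

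Combining the two steps gives the pointwise estimate with error $C\beta s\frac{\rho+\e}{R}R^{-2s}$, and integrating against $|u|^2$ over $A$ concludes the proof. The main obstacle is checking that the prefactor $s$ survives in every error term, with no loss of the form $R^{-2s}$ without $s$. It does: every error is an integral of $s|y|^{-d-1-2s}$ or $s|y|^{-d-2s}$ over a thin shell, and neither produces a $1/s$ factor, since $\int_R^\infty t^{-2-2s}\,dt\le R^{-1-2s}$ uniformly in $s$.
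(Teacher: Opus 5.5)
Your proposal is correct and follows essentially the same route as the paper. You recentre the kernel at the origin by the mean-value theorem (the $\rho/R$ error), then, with $x$ frozen, split $\Rd\setminus B_R$ into the $\e$-cubes contained in it (zero mean of $y\mapsto a(x/\e,y/\e)-\overline{a}^{y}(x/\e)$ plus the gradient bound) and the leftover boundary region (the $\e/R$ error). Your hesitation about the inner radius of the boundary shell is unnecessary: only the part $\{R\le|y|<R+\sqrt d\,\e\}$ is integrated, where $|y|\ge R$, so no strengthening such as $R\ge2\sqrt d\,\e$ is needed, and the paper simply bounds this piece by $\sigma_{d-1}s\int_R^{R+\sqrt d\,\e}t^{-1-2s}\,dt\le\sqrt d\,\sigma_{d-1}s\,\e R^{-1-2s}$.
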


\begin{proof}
In the following, $C=C_d>0$ may change from line to line. Set $\kappa(y)\coloneqq s|y|^{-d-2s}$ and
\begin{equation*}
J\coloneqq s\int_A|u(x)|^2\int_{\Rd\setminus B_R}a\Big(\frac{x}{\e},\frac{y}{\e}\Big)\frac{1}{|y|^{d+2s}}\,dy\,dx.
\end{equation*}

We first replace $|x-y|$ with $|y|$. Let $x\in A$ and $|y|\geq R$. For every $\theta\in[0,1]$ we have $|y-\theta x|\geq|y|-\rho\geq|y|/2$, since $R\geq2\rho$. Hence, by the mean-value theorem applied to $z\mapsto|z|^{-d-2s}$ on the segment joining $y-x$ and $y$,
\begin{equation*}
\Big||x-y|^{-d-2s}-|y|^{-d-2s}\Big|\leq(d+2)\,\rho\,\big(|y|/2\big)^{-d-1-2s}\leq C\rho|y|^{-d-1-2s},
\end{equation*}
where we used again that $d+1+2s<d+3$ since $s<1$. Using $a\leq\beta$ and integrating in polar coordinates, we get
\begin{equation}\label{eq:long-range-step1}
\begin{aligned}
\bigg|s\int_A\int_{\Rd\setminus B_R}a\Big(\frac{x}{\e},\frac{y}{\e}\Big)\frac{|u(x)|^2}{|x-y|^{d+2s}}\,dy\,dx-J\bigg|&\leq C\beta s\rho\|u\|^2_{L^2(A)}\sigma_{d-1}\int_R^{+\infty}t^{-2-2s}\,dt\\
&\leq C\beta\,s\,\frac{\rho}{R}R^{-2s}\|u\|^2_{L^2(A)}.
\end{aligned}
\end{equation}

We now average in $y$. Fix $x\in A$ and set $g(y)\coloneqq a(x/\e,y/\e)$. Integrating in polar coordinates, we have
\begin{equation*}
\int_{\Rd\setminus B_R}\kappa(y)\,dy=\frac{\sigma_{d-1}}{2}R^{-2s},
\end{equation*}
so that
\begin{equation*}
\int_{\Rd\setminus B_R}g(y)\kappa(y)\,dy-\frac{\sigma_{d-1}}{2}R^{-2s}\,\overline{a}^{y}\Big(\frac{x}{\e}\Big)=\int_{\Rd\setminus B_R}\Big(g(y)-\overline{a}^{y}\Big(\frac{x}{\e}\Big)\Big)\kappa(y)\,dy.
\end{equation*}
Note that $|g-\overline{a}^{y}(x/\e)|\leq\beta$. We partition $\Rd\setminus B_R$ into the sets
\begin{equation*}
\Gamma_{1,\e}\coloneqq\bigcup\big\{Q_{\e,k}:Q_{\e,k}\sub\Rd\setminus B_R\big\},\qquad\Gamma_{2,\e}\coloneqq\big(\Rd\setminus B_R\big)\setminus\Gamma_{1,\e}.
\end{equation*}
Let $Q_{\e,k}\sub\Gamma_{1,\e}$ and fix $\overline y\in Q_{\e,k}$. Since $a$ is $1$-periodic in the second variable, we have
\begin{equation*}
\int_{Q_{\e,k}}g(y)\,dy=\e^d\,\overline{a}^{y}\Big(\frac{x}{\e}\Big),
\end{equation*}
so that
\begin{equation*}
\int_{Q_{\e,k}}\Big(g(y)-\overline{a}^{y}\Big(\frac{x}{\e}\Big)\Big)\kappa(y)\,dy=\int_{Q_{\e,k}}\Big(g(y)-\overline{a}^{y}\Big(\frac{x}{\e}\Big)\Big)\big(\kappa(y)-\kappa(\overline y)\big)\,dy.
\end{equation*}
For $y\neq0$ we have $|\nabla\kappa(y)|\leq(d+2)s|y|^{-d-1-2s}$. Moreover, any two points $y,y'\in Q_{\e,k}$ satisfy $|y'|\geq R\geq\sqrt d\,\e$ and $|y|\leq|y'|+\sqrt d\,\e\leq2|y'|$. Hence, by the mean-value theorem,
\begin{equation*}
|\kappa(y)-\kappa(\overline y)|\leq C\e s|y|^{-d-1-2s}\qquad\text{for }y\in Q_{\e,k},
\end{equation*}
where we used again that $d+1+2s<d+3$ since $s<1$. Summing over the cubes of $\Gamma_{1,\e}$ and integrating in polar coordinates, we obtain
\begin{equation*}
\bigg|\int_{\Gamma_{1,\e}}\Big(g(y)-\overline{a}^{y}\Big(\frac{x}{\e}\Big)\Big)\kappa(y)\,dy\bigg|\leq C\beta\e s\,\sigma_{d-1}\int_R^{+\infty}t^{-2-2s}\,dt\leq C\beta\,s\,\frac{\e}{R}R^{-2s}.
\end{equation*}
If $y\in\Gamma_{2,\e}$, the cube $Q_{\e,k}$ containing $y$ intersects $B_R$, so that $|y|<R+\sqrt d\,\e$. Hence, integrating in polar coordinates,
\begin{equation*}
\int_{\Gamma_{2,\e}}\kappa(y)\,dy\leq\sigma_{d-1}s\int_R^{R+\sqrt d\,\e}t^{-1-2s}\,dt\leq\sqrt d\,\sigma_{d-1}s\,\e R^{-1-2s}=C s\,\frac{\e}{R}R^{-2s}.
\end{equation*}
Using $|g-\overline{a}^{y}(x/\e)|\leq\beta$ once more, we conclude that
\begin{equation*}
\bigg|\int_{\Rd\setminus B_R}a\Big(\frac{x}{\e},\frac{y}{\e}\Big)\kappa(y)\,dy-\frac{\sigma_{d-1}}{2}R^{-2s}\,\overline{a}^{y}\Big(\frac{x}{\e}\Big)\bigg|\leq C\beta\,s\,\frac{\e}{R}R^{-2s}
\end{equation*}
for every $x\in A$. Multiplying by $|u(x)|^2$ and integrating over $A$, we get
\begin{equation*}
\bigg|J-\frac{\sigma_{d-1}}{2}R^{-2s}\int_A\overline{a}^{y}\Big(\frac{x}{\e}\Big)|u(x)|^2\,dx\bigg|\leq C\beta\,s\,\frac{\e}{R}R^{-2s}\|u\|^2_{L^2(A)},
\end{equation*}
which, together with \eqref{eq:long-range-step1}, gives \eqref{eq:long-range}.
\end{proof}

\section{Liminf inequality}\label{sec:liminf}
In this section we prove that for all $u\in L^2(\Rd)$ and for all sequences $u_\e\in L^2(\Rd)$ weakly converging to $u$ in $L^2(\Rd)$ we have
$$
\liminf_{\e\to 0}F_\e(u_\e)\ge F_\lambda(u).
$$
To this end, we use a variant of the Fonseca--M\"uller blow-up method \cite{FM,BMS}. This allows us to reduce to competitors defined on small cubes and converging to a constant value. From these, a periodization-by-convexity argument gives a lower bound in terms of $F_\lambda(u)$ by comparison with the problems defining $a_\lambda$. 

\subsection{Localization, truncation and blow-up}\label{subsec:bis-blowup}

Let $u\in L^2(\Rd)$ and let $(u_\e)\sub L^2(\Rd)$ be such that $u_\e$
converges weakly to $u$ in $L^2(\Rd)$, and $F_\e(u_\e)\leq E$ for every $\e>0$ and some fixed $E>0$. Up to subsequences, we assume that the limit $\lim_{\e\to0}F_\e(u_\e)$ exists and that $|u_\e|$ converges weakly to $v$ in $L^2(\Rd)$ for some $v\in L^2(\Rd)$. We also fix a ball $B\sub\Rd$ and a parameter $R\coloneqq R(\e)$ such that
\begin{equation}\label{eq:bis-R}
\lim_{\e\to0}R=+\infty,\qquad\lim_{\e\to0}R^{-2s}=1.
\end{equation}

\def\xim{\xi_M }
For every $M\in\N$ we set
\begin{equation}\label{eq:bis-truncation}
u^M_\e\coloneqq\max\big\{\min\{u_\e,M\},-M\big\},
\end{equation}
so that, pointwise,
\begin{equation}\label{eq:bis-truncation-properties}
|u^M_\e|\leq M,\qquad|u^M_\e(x)-u^M_\e(y)|\leq|u_\e(x)-u_\e(y)|,\qquad|u^M_\e-u_\e|\leq\frac{|u_\e|^2}{M}.
\end{equation}
Up to subsequences, we assume that for every $M\in\N$ there exists $\xim \in L^2(\R^d)$, such that $u^M_\e$ converges weakly to $\xim $ in $L^2(\Rd)$. 
By the last inequality in \eqref{eq:bis-truncation-properties} and by the lower semicontinuity of the $L^1(B)$ norm with respect to the weak convergence in $L^2(B)$,
\begin{equation*}
\|\xim -u\|_{L^1(B)}\leq\frac{1}{M}\sup_{\e>0}\|u_\e\|^2_{L^2(\Rd)},
\end{equation*}
so that $\xim \to u$ in $L^1(B)$ and, up to a subsequence, almost everywhere in $B$, as $M\to+\infty$.

For $\e$ small enough we have $B\sub B_R$, so that, by the symmetry of $a$ and by the nonnegativity of the integrand, $F_\e(u_\e)\geq\mu_{B,\e}(B)$, where $\mu_{B,\e}$ is the measure defined, for every Borel set $A\sub\Rd$, by
\begin{eqnarray}\label{eq:bis-measure}\nonumber
&&\mu_{B,\e}(A)\coloneqq2s\int_{B\cap A}\int_{\Rd\setminus B_R}a\Big(\frac{x}{\e},\frac{y}{\e}\Big)\frac{|u_\e(x)-u_\e(y)|^2}{|x-y|^{d+2s}}\,dy\,dx\\
&&\hskip3cm +s\int_{B\cap A}\int_B a\Big(\frac{x}{\e},\frac{y}{\e}\Big)\frac{|u_\e(x)-u_\e(y)|^2}{|x-y|^{d+2s}}\,dy\,dx.
\end{eqnarray}
The sequence $(\mu_{B,\e})$ is equibounded, so that, up to subsequences, $\mu_{B,\e}\rightharpoonup\mu_B$ in duality with $\C_b(\Rd)$, and, since $B$ is open,
\begin{equation}\label{eq:bis-density-integral}
\liminf_{\e\to0}F_\e(u_\e)\geq\liminf_{\e\to0}\mu_{B,\e}(B)\geq\mu_B(B)\geq\int_B\frac{d\mu_B}{d\L^d}(x)\,dx.
\end{equation}

Let $Q_\rho(x)$ denote the closed hypercube of side $\rho$ centred at $x$. By the Lebesgue differentiation theorem and by a diagonal argument, for $\L^d$-almost every $x_0\in B$ we may choose $\rho\coloneqq\rho(\e)\to0$ so that
\begin{equation}\label{eq:bis-density}
\frac{d\mu_B}{d\L^d}(x_0)=\lim_{\e\to0}\frac{\mu_{B,\e}\big(Q_\rho(x_0)\big)}{\rho^d}<+\infty.
\end{equation}
Since $\rho$ is free to tend to $0$ as slowly as desired, we fix $x_0$ to be, in addition, a Lebesgue point of $v$ and of $\xim $ for every $M\in\N$, and a point at which $\xim \to u$ as $M\to+\infty$, and we further require that
\begin{equation}\label{eq:bis-rho}
\lim_{\e\to0}\rho^{-2s}=1,\qquad\lim_{\e\to0}s\rho^{-d}=0,\qquad N\coloneqq\frac{\rho}{\e}\in\N,
\end{equation}
and that
\begin{equation}\label{eq:bis-averages}
\lim_{\e\to0}\frac{1}{\rho^d}\int_{Q_\rho(x_0)}|u_\e|\,dx=v(x_0),\qquad\lim_{\e\to0}\frac{1}{\rho^d}\int_{Q_\rho(x_0)}u^M_\e\,dx=\xim (x_0)\quad\text{for every }M\in\N.
\end{equation}
From now on we drop the dependence on $x_0$ and simply write $Q_\rho$; note that $Q_\rho\sub B$ for $\e$ small enough, since $B$ is open. By \eqref{eq:bis-density-integral} and \eqref{eq:bis-density}, the $\Gamma$-liminf inequality follows once we produce a lower bound for
\begin{align}
\frac{\mu_{B,\e}\big(Q_\rho\big)}{\rho^d}&=\frac{2s}{\rho^d}\int_{Q_\rho}\int_{\Rd\setminus B_R}a\Big(\frac{x}{\e},\frac{y}{\e}\Big)\frac{|u_\e(x)-u_\e(y)|^2}{|x-y|^{d+2s}}\,dy\,dx\nonumber\\
&\quad+\frac{s}{\rho^d}\int_{Q_\rho}\int_B a\Big(\frac{x}{\e},\frac{y}{\e}\Big)\frac{|u_\e(x)-u_\e(y)|^2}{|x-y|^{d+2s}}\,dy\,dx\eqqcolon I_{1,\e}+I_{2,\e}.\label{eq:bis-split}
\end{align}

\subsection{The periodic competitor}\label{subsec:bis-competitor}

We fix $M\in\N$ and a parameter $\delta\coloneqq\delta(\e)>0$ such that
\begin{equation}\label{eq:bis-delta}
\lim_{\e\to0}\frac{\delta}{\rho}=0,\qquad\lim_{\e\to0}\frac{\e}{\delta}=0,\qquad\lim_{\e\to0}\delta^{-2s}=1,
\end{equation}
and, for $w\in L^2(Q_\rho)$, we set
\begin{equation}\label{eq:bis-G}
G_\e(w)\coloneqq\frac{s}{\rho^d}\int_{Q_\rho}\int_{Q_\rho\cap\{|x-y|>\e\}}a\Big(\frac{x}{\e},\frac{y}{\e}\Big)\frac{|w(x)-w(y)|^2}{|x-y|^{d+2s}}\,dy\,dx,
\end{equation}
so that, discarding nonnegative contributions and using \eqref{eq:bis-truncation-properties},
\begin{equation}\label{eq:bis-I2-G}
I_{2,\e}\geq G_\e(u^M_\e).
\end{equation}

We now replace $u^M_\e$ by an $\e$-periodic function, in three steps: we flatten $u^M_\e$ near $\partial Q_\rho$, we extend it by $\rho$-periodicity, and we average its translates by $\e\Z^d$.

\begin{lemma}[Periodic competitor]\label{lem:bis-competitor}
Let $S\coloneqq Q_\rho\setminus Q_{\rho-\delta}$, where $Q_{\rho-\delta}$ denotes the closed hypercube of side $\rho-\delta$ centred at $x_0$, and let
\begin{equation*}
c_\e\coloneqq\frac{1}{|S|}\int_Su^M_\e(x)\,dx,\qquad v^M_\e\coloneqq u^M_\e\,\rchi_{Q_{\rho-\delta}}+c_\e\,\rchi_S,
\end{equation*}
extended to $\Rd$ by $\rho$-periodicity. Let $N$ be as in \eqref{eq:bis-rho} and
\begin{equation*}
w^M_\e\coloneqq\frac{1}{N^d}\sum_{k\in\{0,\dots,N-1\}^d}v^M_{\e,k},\qquad v^M_{\e,k}\coloneqq v^M_\e(\cdot+\e k).
\end{equation*}
Then $w^M_\e$ is $\e$-periodic, $\|w^M_\e\|_{L^\infty(\Rd)}\leq M$,
\begin{equation}\label{eq:bis-mean}
\frac{1}{\rho^d}\int_{Q_\rho}w^M_\e(x)\,dx=\frac{1}{\rho^d}\int_{Q_\rho}u^M_\e(x)\,dx,
\end{equation}
\begin{eqnarray}\label{eq:bis-energy}\nonumber
G_\e(w^M_\e)&\leq& G_\e(u^M_\e)+4d\,\sigma_{d-1}\beta M^2\big(\e^{-2s}-(\sqrt d\,\rho)^{-2s}\big)\frac{\delta}{\rho}\\
&&\hskip2cm+2\sigma_{d-1}\beta M^2\Big(\Big(\frac{\delta}{2}\Big)^{-2s}-(\sqrt d\,\rho)^{-2s}\Big),
\end{eqnarray}
and
\begin{equation}\label{eq:bis-long-range}
\int_{Q_\rho}\overline{a}^{y}\Big(\frac{x}{\e}\Big)|w^M_\e(x)|^2\,dx\leq\int_{Q_\rho}\overline{a}^{y}\Big(\frac{x}{\e}\Big)|u_\e(x)|^2\,dx+d\,\beta M^2\rho^{d-1}\delta.
\end{equation}
\end{lemma}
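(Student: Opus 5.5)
The four assertions will be checked in order of increasing difficulty: periodicity and the $L^\infty$ bound, the mean identity \eqref{eq:bis-mean}, the long-range estimate \eqref{eq:bis-long-range}, and last the energy estimate \eqref{eq:bis-energy}. The energy estimate is a convexity bound over $\e\Z^d$-translates together with a boundary-layer error.

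\emph{Periodicity, bound, mean.} Since $v^M_\e$ is $\rho$-periodic and $\rho=N\e$, translating by $\e e_i$ permutes the indices $k$ modulo $N$. Hence $w^M_\e(\cdot+\e e_i)=w^M_\e$, so $w^M_\e$ is $\e$-periodic. Each $v^M_{\e,k}$ takes values in $[-M,M]$ because $c_\e$ is an average of $u^M_\e$, so $\|w^M_\e\|_{L^\infty}\le M$. For the mean, the integral of a $\rho$-periodic function over $Q_\rho$ is translation invariant, so $\int_{Q_\rho}v^M_{\e,k}=\int_{Q_\rho}v^M_\e$. Moreover $\int_{Q_\rho} v^M_\e=\int_{Q_{\rho-\delta}}u^M_\e+c_\e|S|=\int_{Q_\rho}u^M_\e$, which gives \eqref{eq:bis-mean}.

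\emph{Long-range term.} The map $t\mapsto|t|^2$ is convex, so $|w^M_\e|^2\le N^{-d}\sum_k|v^M_{\e,k}|^2$. The weight $\overline a^y(x/\e)$ is $\e$-periodic. Changing variables $x\mapsto x-\e k$ and using the $\rho$-periodicity of $v^M_\e$ therefore gives
\[
\int_{Q_\rho}\overline a^y(x/\e)|v^M_{\e,k}|^2=\int_{Q_\rho}\overline a^y(x/\e)|v^M_\e|^2 .
\]
On $Q_{\rho-\delta}$ we have $|v^M_\e|=|u^M_\e|\le|u_\e|$. On $S$ we bound the integrand by $\beta M^2$, and $|S|\le d\rho^{d-1}\delta$. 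This yields \eqref{eq:bis-long-range}.

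\emph{Energy term.} The functional $w\mapsto|w(x)-w(y)|^2$ is convex, so $G_\e(w^M_\e)\le N^{-d}\sum_k G_\e(v^M_{\e,k})$. The coefficient $a(x/\e,y/\e)$ is invariant under the simultaneous shift $(x,y)\mapsto(x+\e k,y+\e k)$. Hence $G_\e(v^M_{\e,k})$ equals the same double integral of $v^M_\e$ over the shifted domain $(Q_\rho-\e k)^2\cap\{|x-y|>\e\}$. We then reduce modulo $\rho\Z^d$ in $x$: for each $x$-cube, the pairs $(x,y)$ with $y$ in the same $\rho$-cell of the periodic tiling as $x$ coincide (after translation) with pairs in $Q_\rho\times Q_\rho$. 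The remaining pairs have $x$ and $y$ in different cells of the $\rho$-tiling. Where both points lie in the unmodified parts, we bound by $G_\e(u^M_\e)$, and all remaining pairs go into error terms.

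The delicate step is the bookkeeping of these error pairs. I would split them into two types. \textbf{(a)} One point lies in the strip $S$, or in its periodic copies. Then $|x-y|\in(\e,\sqrt d\rho)$, and Lemma~\ref{lem:annuli} applied with $A=S$ (or with $A$ containing the translates) gives a contribution of order $\beta M^2 |S|(\e^{-2s}-(\sqrt d\rho)^{-2s})/\rho^d$; up to the symmetric factor $2$ this is the first error term, with $|S|/\rho^d\le d\delta/\rho$. \textbf{(b)} Both points lie in flattened interiors of different $\rho$-cells. Then they are separated by the strip, so $|x-y|\ge\delta/2$ roughly; this is the purpose of the flattening. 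Lemma~\ref{lem:annuli} with $A=Q_\rho$ and radii $\delta/2$ and $\sqrt d\rho$ then gives the second error term. Pairs with a point in $S$ and the other point in the same constant region contribute nothing. I expect some care to be needed to justify the constants and the cutoff $|x-y|<\sqrt d\rho$. To handle it, I would cut the domain exactly at the pairs lying within $Q_\rho\times Q_\rho$ after the $x$-shift modulo $\rho$, so that $|x-y|\le\sqrt d\rho$ holds automatically.
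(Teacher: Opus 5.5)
Your proposal is correct and follows essentially the paper's argument. The ingredients are the same: convexity of $G_\e$ over the $\e\Z^d$-translates, invariance of the integrand under simultaneous $\e\Z^d$- and $\rho\Z^d$-shifts, splitting the translated cube into its pieces in the $\rho$-cells, strip pairs bounded by the annulus estimate, and cross-cell pairs controlled because $v^M_\e$ is constant on the $\delta/2$-neighbourhood of the $\rho$-grid. The only difference is bookkeeping: the paper compares in two steps ($u^M_\e\to v^M_\e$ for the strip error, then $v^M_\e\to v^M_{\e,k}$ for the cross-cell error), whereas you classify all pairs of the translated cube in one pass, which gives the same constants.
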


\begin{proof}
By construction $w^M_\e$ is $\e$-periodic and, since $|u^M_\e|\leq M$ and $|c_\e|\leq M$, also $|v^M_\e|\leq M$ and $|w^M_\e|\leq M$. By the definition of $c_\e$ we have $\int_{Q_\rho}v^M_\e dx=\int_{Q_\rho}u^M_\e dx$, while the $\rho$-periodicity of $v^M_\e$ gives $\int_{Q_\rho}v^M_{\e,k}=\int_{Q_\rho}v^M_\e$ for every $k\in\Z^d$; averaging over $k$ gives \eqref{eq:bis-mean}.

We prove \eqref{eq:bis-energy}. We first compare $v^M_\e$ with $u^M_\e$. Splitting the domain of integration in $G_\e(v^M_\e)$ according to whether $x$ and $y$ belong to $Q_{\rho-\delta}$ or to $S$, the contribution of $Q_{\rho-\delta}\times Q_{\rho-\delta}$ is at most $G_\e(u^M_\e)$, the contribution of $S\times S$ vanishes because $v^M_\e$ is constant on $S$, and, using $|v^M_\e(x)-v^M_\e(y)|\leq2M$, $a\leq\beta$ and $|S|\leq d\,\rho^{d-1}\delta$, the contribution of the remaining pairs is at most
\begin{equation*}
\frac{8\beta M^2s}{\rho^d}|S|\,\sigma_{d-1}\int_{\e}^{\sqrt d\,\rho}t^{-1-2s}\,dt\leq4d\,\sigma_{d-1}\beta M^2\big(\e^{-2s}-(\sqrt d\,\rho)^{-2s}\big)\frac{\delta}{\rho}.
\end{equation*}

We then compare $w^M_\e$ with $v^M_\e$. Since $a$ is $1$-periodic, the integrand of $G_\e$ is invariant under the simultaneous translation $(x,y)\mapsto(x+\e k,y+\e k)$; hence, setting $Q'\coloneqq Q_\rho+\e k$ and assuming without loss of generality that $\e k\in[0,\rho)^d$, we obtain
\begin{equation*}
G_\e(v^M_{\e,k})=\frac{s}{\rho^d}\int_{Q'}\int_{Q'\cap\{|x-y|>\e\}}a\Big(\frac{x}{\e},\frac{y}{\e}\Big)\frac{|v^M_\e(x)-v^M_\e(y)|^2}{|x-y|^{d+2s}}\,dy\,dx.
\end{equation*}
We write $Q'=\bigcup_{\sigma\in\{0,1\}^d}A_\sigma$ with $A_\sigma\coloneqq Q'\cap(Q_\rho+\rho\sigma)$; the sets $A_\sigma-\rho\sigma$ are then pairwise disjoint subsets of $Q_\rho$ covering it, and we split the double integral accordingly. Since $\rho/\e\in\N$ (recall \eqref{eq:bis-rho}) and $v^M_\e$ is $\rho$-periodic, the integrand is invariant under $(x,y)\mapsto(x+\rho\sigma,y+\rho\sigma)$, so that the contributions of the sets $A_\sigma\times A_\sigma$ add up to at most $G_\e(v^M_\e)$. If instead $\sigma\neq\tau$, $x\in A_\sigma$ and $y\in A_\tau$, the segment joining $x$ and $y$ meets $\bigcup_{j\in\Z^d}\partial(Q_\rho+\rho j)$, on whose $\delta/2$-neighbourhood $v^M_\e$ is constant; hence $v^M_\e(x)=v^M_\e(y)$ whenever $|x-y|<\delta/2$, and the contributions of the sets $A_\sigma\times A_\tau$ with $\sigma\neq\tau$ add up to at most
\begin{equation*}
\frac{4\beta M^2s}{\rho^d}\int_{Q'}\int_{\{\delta/2\leq|x-y|\leq\sqrt d\,\rho\}}\frac{dy\,dx}{|x-y|^{d+2s}}=2\sigma_{d-1}\beta M^2\Big(\Big(\frac{\delta}{2}\Big)^{-2s}-(\sqrt d\,\rho)^{-2s}\Big).
\end{equation*}
Since $G_\e$ is convex, averaging over $k\in\{0,\dots,N-1\}^d$ gives \eqref{eq:bis-energy}.

Finally we prove \eqref{eq:bis-long-range}. Since $|u^M_\e|\leq|u_\e|$ pointwise, it is enough to prove it with $u^M_\e$ in place of $u_\e$. As $v^M_\e=u^M_\e$ on $Q_{\rho-\delta}$ and $|v^M_\e|\leq M$, using $\overline{a}^{y}\leq\beta$ and $|S|\leq d\,\rho^{d-1}\delta$ we get
\begin{equation*}
\int_{Q_\rho}\overline{a}^{y}\Big(\frac{x}{\e}\Big)|v^M_\e(x)|^2\,dx\leq\int_{Q_\rho}\overline{a}^{y}\Big(\frac{x}{\e}\Big)|u^M_\e(x)|^2\,dx+d\,\beta M^2\rho^{d-1}\delta.
\end{equation*}
Moreover $\overline{a}^{y}$ is $1$-periodic, so that, arguing as above but without the contributions of the sets $A_\sigma\times A_\tau$ with $\sigma\neq\tau$, we obtain $\int_{Q_\rho}\overline{a}^{y}(x/\e)|v^M_{\e,k}|^2\,dx=\int_{Q_\rho}\overline{a}^{y}(x/\e)|v^M_\e|^2\,dx$ for every $k\in\Z^d$; the convexity of $t\mapsto t^2$ concludes the proof.
\end{proof}

\subsection{Lower bounds in the cell}\label{subsec:bis-cell}

We set
\begin{equation}\label{eq:bis-cell-functions}
\nu_\e\coloneqq\frac{1}{\rho^d}\int_{Q_\rho}w^M_\e(x)\,dx,\qquad\phi_\e(z)\coloneqq w^M_\e(\e z)-\nu_\e,
\end{equation}
so that $\phi_\e$ is $1$-periodic with zero mean on $Q$, $|\nu_\e|\leq M$ and $\|\phi_\e\|_{L^\infty(\Rd)}\leq2M$. By \eqref{eq:bis-mean} and \eqref{eq:bis-averages},
\begin{equation}\label{eq:bis-z-limit}
\lim_{\e\to0}\nu_\e=\xim (x_0).
\end{equation}
Moreover, since $x\mapsto\overline{a}^{y}(x/\e)|w^M_\e(x)|^2$ is $\e$-periodic and $Q_\rho$ is the union of $N^d$ periodicity cells,
\begin{equation}\label{eq:bis-cell-identity}
\frac{1}{\rho^d}\int_{Q_\rho}\overline{a}^{y}\Big(\frac{x}{\e}\Big)|w^M_\e(x)|^2\,dx=\int_Q\overline{a}^{y}(z)|\nu_\e+\phi_\e(z)|^2\,dz.
\end{equation}

\emph{Long-range interactions.} Expanding the square in $I_{1,\e}$, see \eqref{eq:bis-split}, and dropping the nonnegative term containing $|u_\e(y)|^2$, we get
\begin{equation*}
I_{1,\e}\geq\frac{2s}{\rho^d}\int_{Q_\rho}\int_{\Rd\setminus B_R}a\Big(\frac{x}{\e},\frac{y}{\e}\Big)\frac{|u_\e(x)|^2}{|x-y|^{d+2s}}\,dy\,dx-J_\e,
\end{equation*}
where
\begin{equation}\label{eq:bis-J}
J_\e\coloneqq\frac{4s}{\rho^d}\int_{Q_\rho}\int_{\Rd\setminus B_R}a\Big(\frac{x}{\e},\frac{y}{\e}\Big)\frac{u_\e(x)u_\e(y)}{|x-y|^{d+2s}}\,dy\,dx.
\end{equation} Let $r>0$ be a fixed radius such that $B\sub B_r$. For $\e$ small enough we have $R\geq2r$, so that $|x-y|\geq|y|/2$ for every $x\in B$ and $y\in\Rd\setminus B_R$ and, integrating in polar coordinates, one obtains
\begin{equation*}
\int_{\Rd\setminus B_R}\frac{dy}{|x-y|^{2d+4s}}\leq C_d\,R^{-d-4s}\qquad\text{for every }x\in B.
\end{equation*}
Indeed, unlike its $L^1$ norm, the $L^2$ norm of the kernel vanishes as $R\to+\infty$. Hence, by $a\leq\beta$, applying the Cauchy--Schwarz inequality in $y$, and since the resulting bound is uniform in $x$, it follows that
\begin{equation*}
|J_\e|\leq C_d\,\beta\,s\,R^{-\frac d2-2s}\|u_\e\|_{L^2(\Rd)}\,\frac{1}{\rho^d}\int_{Q_\rho}|u_\e(x)|\,dx,
\end{equation*}
which vanishes as $\e\to0$ by \eqref{eq:bis-averages}, since $s\to0$ and $R\to+\infty$. Applying \cref{lem:long-range} with $A=Q_\rho\sub B_r$ and $u=u_\e$, which is admissible for $\e$ small enough, and then \eqref{eq:bis-long-range} and \eqref{eq:bis-cell-identity}, we conclude that
\begin{equation}\label{eq:bis-I1}
I_{1,\e}\geq\sigma_{d-1}R^{-2s}\int_Q\overline{a}^{y}(z)|\nu_\e+\phi_\e(z)|^2\,dz-\sigma_{d-1}\beta\,d\,M^2\frac{\delta}{\rho}-J_\e-K_\e,
\end{equation}
where $K_\e\coloneqq\frac{2C_d\beta}{\rho^d}s\frac{r+\e}{R}R^{-2s}\|u_\e\|^2_{L^2(\Rd)}$ vanishes as $\e\to0$ by \eqref{eq:bis-rho}.

\emph{Short-range interactions.} We set $b_\e(x,y)\coloneqq a(x,y)|\phi_\e(x)-\phi_\e(y)|^2$, which is a $1$-periodic function with $0\leq b_\e\leq16\beta M^2$ and
\begin{equation}\label{eq:bis-b-eps}
\|b_\e\|_{L^1(Q^2)}=\overline{b_\e}=\int_Q\int_Q a(x,y)|\phi_\e(x)-\phi_\e(y)|^2\,dx\,dy\geq2\alpha\|\phi_\e\|^2_{L^2(Q)},
\end{equation}
where the last inequality follows from $a\geq\alpha$ and from $\int_Q\phi_\e=0$. Since $w^M_\e(x)-w^M_\e(y)=\phi_\e(x/\e)-\phi_\e(y/\e)$, and since $y\in Q_\rho$ whenever $x\in Q_{\rho-2\delta}$ and $|x-y|<\delta$, discarding nonnegative contributions in $G_\e(w^M_\e)$ and applying \cref{lem:averaging} with $b=b_\e$, $A=Q_{\rho-2\delta}$, $c=1$ and outer radius $\delta$, we get
\begin{equation}\label{eq:bis-Geps}
G_\e(w^M_\e)\geq\frac{\sigma_{d-1}}{2}\big(\e^{-2s}-\delta^{-2s}\big)\,\overline{b_\e}\,\frac{|Q_{\rho-2\delta}|}{\rho^d}-C_d\,\overline{b_\e}\,\e^{-2s}\Big(2^{d+4}s+2^d\,\frac{|\partial_{2\e}Q_{\rho-2\delta}|}{\rho^d}\Big).
\end{equation}
Since $|Q_{\rho-2\delta}|/\rho^d\to1$, $|\partial_{2\e}Q_{\rho-2\delta}|\leq C_d\rho^{d-1}\e$ and $\overline{b_\e}\leq16\beta M^2$, combining \eqref{eq:bis-I2-G}, \eqref{eq:bis-energy} and \eqref{eq:bis-Geps} we obtain
\begin{eqnarray}\label{eq:bis-I2}\nonumber
I_{2,\e}&\geq&\frac{\sigma_{d-1}}{2}\big(\e^{-2s}-\delta^{-2s}\big)\,\overline{b_\e}\,\big(1+o(1)\big)\\
&&-C\,M^2\bigg(\e^{-2s}\Big(s+\frac{\e}{\rho}+\frac{\delta}{\rho}\Big)+\Big(\frac{\delta}{2}\Big)^{-2s}-(\sqrt d\,\rho)^{-2s}\bigg),
\end{eqnarray}
where $C=C(d,\alpha,\beta)$.

\subsection{Conclusion}\label{subsec:bis-conclusion}

Now, we divide the two cases $\lambda<+\infty$ and $\lambda=+\infty$, which are treated separately.

\emph{The case $\lambda<+\infty$.} Here $\e^{-2s}\to\lambda$ is bounded and $\e^{-2s}-\delta^{-2s}\to\lambda-1$, so that the error term in \eqref{eq:bis-I2} vanishes by \eqref{eq:bis-rho} and \eqref{eq:bis-delta}, and
\begin{equation*}
I_{2,\e}\geq\frac{\sigma_{d-1}}{2}(\lambda-1)\int_Q\int_Q a(x,y)|\phi_\e(x)-\phi_\e(y)|^2\,dx\,dy+o(1)\qquad\text{as }\e\to0.
\end{equation*}
Gathering this estimate with \eqref{eq:bis-split} and \eqref{eq:bis-I1}, and recalling \eqref{eq:a-lambda-z} and that $R^{-2s}\to1$, we get
\begin{equation}\label{eq:bis-cell-recognized}
\frac{\mu_{B,\e}\big(Q_\rho\big)}{\rho^d}\geq\frac{\sigma_{d-1}}{2}\,A_\lambda(\nu_\e,\phi_\e)+o(1)\qquad\text{as }\e\to0.
\end{equation}

\emph{The case $\lambda=+\infty$.} Here we discard $I_{2,\e}\geq0$ in \eqref{eq:bis-split}, and we only use \eqref{eq:bis-I2} to rule out the oscillations of $w^M_\e$. Indeed, $\e^{2s}\delta^{-2s}\to0$, so that $\e^{2s}\big(\e^{-2s}-\delta^{-2s}\big)\to1$; multiplying \eqref{eq:bis-I2} by $\e^{2s}$, and using that $\e^{2s}\big(\e^{-2s}-\delta^{-2s}\big)\leq1$ and that $I_{2,\e}\leq\mu_{B,\e}(Q_\rho)/\rho^d$ is bounded by \eqref{eq:bis-density}, we obtain
\begin{equation*}
\frac{\sigma_{d-1}}{2}\,\overline{b_\e}\,\big(1+o(1)\big)\leq\e^{2s}I_{2,\e}+C\,M^2\Big(s+\frac{\e}{\rho}+\frac{\delta}{\rho}+\e^{2s}\Big)\longrightarrow0,
\end{equation*}
therefore $\overline{b_\e}\to0$ and, by \eqref{eq:bis-b-eps}, $\phi_\e\to0$ in $L^2(Q)$. Since $|\nu_\e|\leq M$ and $\overline{a}^{y}\leq\beta$, this gives
\begin{equation*}
\int_Q\overline{a}^{y}(z)|\nu_\e+\phi_\e(z)|^2\,dz=|\nu_\e|^2\int_Q\overline{a}^{y}(z)\,dz+o(1)=\overline{a}\,|\nu_\e|^2+o(1),
\end{equation*}
so that, by \eqref{eq:bis-split} and \eqref{eq:bis-I1},
\begin{equation*}
\frac{\mu_{B,\e}\big(Q_\rho\big)}{\rho^d}\geq\sigma_{d-1}\,\overline{a}\,|\nu_\e|^2+o(1)=\frac{\sigma_{d-1}}{2}\,a_{+\infty}|\nu_\e|^2+o(1)\qquad\text{as }\e\to0,
\end{equation*}
where we used \eqref{eq:a-infty}.

\emph{Conclusion.} In both cases, since $A_\lambda(z,z\phi)=z^2A_\lambda(1,\phi)$, we have $\inf_\phi A_\lambda(z,\phi)=a_\lambda z^2$, so that
\begin{equation*}
\frac{\mu_{B,\e}\big(Q_\rho\big)}{\rho^d}\geq\frac{\sigma_{d-1}}{2}\,a_\lambda|\nu_\e|^2+o(1)\qquad\text{as }\e\to0.
\end{equation*}
Hence, by \eqref{eq:bis-density} and \eqref{eq:bis-z-limit}, and letting $M\to+\infty$,
\begin{equation}\label{eq:bis-density-bound}
\frac{d\mu_B}{d\L^d}(x_0)\geq\frac{\sigma_{d-1}}{2}\,a_\lambda\,|u(x_0)|^2\qquad\text{for $\L^d$-almost every }x_0\in B.
\end{equation}
Integrating \eqref{eq:bis-density-bound} over $B$ and recalling \eqref{eq:bis-density-integral}, we obtain
\begin{equation*}
\liminf_{\e\to0}F_\e(u_\e)\geq\frac{\sigma_{d-1}}{2}\,a_\lambda\int_B|u(x)|^2\,dx.
\end{equation*}
Since $B$ is an arbitrary ball, taking an increasing sequence of balls invading $\Rd$, by monotone convergence we conclude that
\begin{equation*}
\liminf_{\e\to0}F_\e(u_\e)\geq\frac{\sigma_{d-1}}{2}\,a_\lambda\|u\|^2_{L^2(\Rd)}=F_\lambda(u),
\end{equation*}
which is the $\Gamma$-liminf inequality.

\section{Limsup inequality}\label{sec:limsup}

The limit functional $F_\lambda$ is continuous with respect to the strong topology of $L^2(\Rd)$; therefore, by a diagonal argument it suffices to prove the existence of a recovery sequence, for every $u$ in a strongly dense subset of $L^2(\Rd)$. In our case, we consider the functions $u\in L^2(\Rd)$ whose support is essentially contained in a cube and which are piecewise constant on dyadic cubes of arbitrarily small side length. Without loss of generality, we may assume that $u$ is supported in the reference cube $[0,1]^d$ and that there exists $n\in\N$ such that
\begin{equation}\label{eq:u-dyadic}
u=\sum_{k\in K_n}u_k\rchi_{Q_k},
\end{equation}
where $u_k\in\R$, $Q_k\coloneqq2^{-n}\big(k+[0,1)^d\big)$ and $K_n\coloneqq\{k\in\Z^d:Q_k\sub[0,1]^d\}=\{0,\dots,2^n-1\}^d$.

We fix $\delta>0$, and let $\varphi\in L^2(Q)$ with zero mean such that
\begin{equation}\label{eq:almost-min-0}
A_\lambda(1,\phi)\leq a_\lambda+\delta.
\end{equation}
The function $\varphi$ is extended to $\Rd$ by $1$-periodicity.
Note that, by a density argument, we may assume $\phi\in \mathcal{C}^\infty(\Rd)$ and that if $\lambda=+\infty$, then $\phi=0$ (see Remark \ref{rem:a-infty}). We then set $L\coloneqq\operatorname{Lip}(\phi)\max_{k\in K_n}|u_k|\,<+\infty$.

We define the recovery sequence as
\begin{equation}\label{eq:recovery}
u_\e(x)\coloneqq u(x)\Big(1+\phi\Big(\frac{x}{\e}\Big)\Big),\qquad x\in\Rd.
\end{equation}
First of all, note that $u_\e\overset{L^2}{\rightharpoonup}u$, and there exists $C>0$, independent of $\e$, such that $\|u_\e\|_{L^\infty(\Rd)}\leq C$. Moreover, for every $k\in K_n$, the restriction of $u_\e-u$ to $Q_k$ is smooth and
\begin{equation}\label{eq:grad-recovery}
\|\nabla(u_\e-u)\|_{L^\infty(Q_k)}= \frac{|u_k|}\e\|\nabla \varphi\|_{L^\infty(\Rd)}\leq\frac{L}{\e}.
\end{equation}
We also point out that $u_\e\in H^t(\Rd)$ for every $t\in(0,1/2)$.

We fix a parameter $R\coloneqq R(\e)>0$ such that
\begin{equation*}
\lim_{\e\to0}R=+\infty,\qquad\lim_{\e\to0}R^{-2s}=1.
\end{equation*}
We also set
\begin{equation*}
\Sigma\coloneqq\bigcup_{k\in K_n}\partial Q_k.
\end{equation*}
Note that $\Sigma$ depends only on $u$, and that it satisfies
\begin{equation}\label{eq:minkowski-Sigma}
\big|\{x\in\Rd:\operatorname{dist}(x,\Sigma)<\tau\}\big|\leq C_\Sigma\tau\qquad\text{for every }\tau\in(0,1],
\end{equation}
where $C_\Sigma>0$ depends only on $d$ and $u$.
We decompose the interactions into several regions, and discuss separately the contributions. 
Given a measurable set $\Lambda\sub\Rd\times\Rd$, we consider the localized functional
\begin{equation}\label{eq:F-localized}
F_\e(v,\Lambda)\coloneqq s\iint_{\Lambda}a\Big(\frac{x}{\e},\frac{y}{\e}\Big)\frac{|v(x)-v(y)|^2}{|x-y|^{d+2s}}\,dx\,dy.
\end{equation}
For $\e$ small enough so that $[0,1]^d\sub B_R$, we divide $\Rd\times\Rd$ into the following sets
\begin{align*}
\Lambda_{1,\e}&\coloneqq\big\{(x,y)\in\Rd\times\Rd:x\notin B_R\text{ or }y\notin B_R\big\},\\
\Lambda_{2,\e}&\coloneqq\bigcup_{k\in K_n}\big\{(x,y)\in Q_k\times Q_k:|x-y|<1\big\},\\
\Lambda_{3,\e}&\coloneqq\big\{(x,y)\in B_R\times B_R:|x-y|\geq 1\big\},\\
\Lambda_{4,\e}&\coloneqq\big\{(x,y)\in B_R\times B_R:|x-y|<1\big\}\setminus\Lambda_{2,\e}.
\end{align*}
We note that $\Lambda_{2,\e}$ does not depend on $\e$, but we include the index to simplify the notation throughout. For every $\e>0$ small enough and every $v\in L^2(\Rd)$, it holds
\begin{equation}\label{eq:decomposition}
F_\e(v)=\sum_{k=1}^4F_\e(v,\Lambda_{k,\e}).
\end{equation}

We show that the contributions coming from the sets $\Lambda_{3,\e}$ and $\Lambda_{4,\e}$ are asymptotically negligible.

We first consider $\Lambda_{3,\e}$. Since $u_\e$ vanishes outside $[0,1]^d$, only the pairs $(x,y)\in\Lambda_{3,\e}$ with $x\in[0,1]^d$ or $y\in[0,1]^d$ contribute, and for such pairs we have $1\leq|x-y|<2R$. Hence, using the symmetry of $a$, $a\leq\beta$ and \cref{lem:annuli} with $A=[0,1]^d$, $r_1=1$ and $r_2=2R$, we obtain
\begin{align*}
F_\e(u_\e,\Lambda_{3,\e})&\leq2\beta s\int_{[0,1]^d}\int_{\{1\leq|x-y|<2R\}}\frac{|u_\e(x)-u_\e(y)|^2}{|x-y|^{d+2s}}\,dy\,dx\\
&\leq4\beta\sigma_{d-1}\sup_{\e}\|u_\e\|^2_{L^\infty(\Rd)}\big(1-(2R)^{-2s}\big).
\end{align*}
Since $R^{-2s}\to1$ and $(u_\e)$ is bounded in $L^\infty(\Rd)$, it follows that
\begin{equation}\label{eq:Lambda3}
\limsup_{\e\to0}F_\e(u_\e,\Lambda_{3,\e})=0.
\end{equation}
We now consider $\Lambda_{4,\e}$. Again, only the pairs $(x,y)\in\Lambda_{4,\e}$ with $x\in[0,1]^d$ or $y\in[0,1]^d$ contribute. For such pairs, $x$ and $y$ do not belong to the same cube $Q_k$, so that the segment joining them intersects $\Sigma$ and $\operatorname{dist}(x,\Sigma)\leq|x-y|<1$. Therefore, by \eqref{eq:minkowski-Sigma} and \cref{lem:jump}, for $\e$ small enough so that $s<1/2$, we get
\begin{align*}
F_\e(u_\e,\Lambda_{4,\e})&\leq\beta s\iint_{\{\operatorname{dist}(x,\Sigma)\leq|x-y|<1\}}\frac{|u_\e(x)-u_\e(y)|^2}{|x-y|^{d+2s}}\,dy\,dx\\
&\leq4\beta\sigma_{d-1}C_\Sigma\sup_{\e}\|u_\e\|^2_{L^\infty(\Rd)}\,\frac{s}{1-2s}.
\end{align*}
Since $s\to0$ and $(u_\e)$ is bounded in $L^\infty(\Rd)$, it follows that
\begin{equation}\label{eq:Lambda4}
\limsup_{\e\to0}F_\e(u_\e,\Lambda_{4,\e})=0.
\end{equation}

We now consider $\Lambda_{1,\e}$. Since $u_\e$ vanishes outside $[0,1]^d$ and $[0,1]^d\sub B_R$, by the symmetry of $a$ we have
\begin{equation*}
F_\e(u_\e,\Lambda_{1,\e})=2\sum_{k\in K_n}s |u_k|^2\int_{Q_k}\int_{\Rd\setminus B_R}a\Big(\frac{x}{\e},\frac{y}{\e}\Big)\frac{|1+\phi(x/\e)|^2}{|x-y|^{d+2s}}\,dy\,dx.
\end{equation*}
For $\e$ small enough, we have $R\geq\max\{2\sqrt d,\sqrt d\,\e\}$. Hence, applying \cref{lem:long-range} with $A=Q_k\sub B_{\sqrt d}$, $\rho=\sqrt d$ and $u=u_\e$, we get
\begin{equation*}
\Big|F_\e(u_\e,\Lambda_{1,\e})-\sigma_{d-1}R^{-2s}\sum_{k\in K_n}|u_k|^2\int_{Q_k}\overline{a}^{y}\Big(\frac{x}{\e}\Big)\Big|1+\phi\Big(\frac{x}{\e}\Big)\Big|^2\,dx\Big|\leq2C_d\beta\,s\,\frac{\sqrt d+\e}{R}R^{-2s}\|u_\e\|^2_{L^2(\Rd)},
\end{equation*}
where $C_d$ is the constant given by \cref{lem:long-range}. Since $s\to0$, $R\to+\infty$, $R^{-2s}\to1$ and $(u_\e)$ is bounded in $L^2(\Rd)$, it follows that
\begin{equation*}
\lim_{\e\to0}\Big(F_\e(u_\e,\Lambda_{1,\e})-\sigma_{d-1}\sum_{k\in K_n}|u_k|^2\int_{Q_k}\overline{a}^{y}\Big(\frac{x}{\e}\Big)\Big|1+\phi\Big(\frac{x}{\e}\Big)\Big|^2\,dx\Big)=0.
\end{equation*}
Finally, since the function $x\mapsto\overline{a}^{y}(x)|1+\phi(x)|^2$ is $1$-periodic and bounded, by the Riemann--Lebesgue lemma, we obtain
\begin{eqnarray}\label{eq:Lambda1}\nonumber
\lim_{\e\to0}F_\e(u_\e,\Lambda_{1,\e})&=&\frac{\sigma_{d-1}}{2}\sum_{k\in K_n}|Q_k||u_k|^2\,2\int_{Q}\overline{a}^{y}(x)|1+\phi(x)|^2\,dx\\
&=&\frac{\sigma_{d-1}}{2} \,\Big(2\int_{Q}\overline{a}^{y}(x)|1+\phi(x)|^2\,dx\Big) \|u\|^2_{L^2(\Rd)}.
\end{eqnarray}

We finally consider $\Lambda_{2,\e}$. We first assume that $\lambda<+\infty$. For $x,y\in Q_k$ we have $u_\e(x)-u_\e(y)=u_k(\phi(x/\e)-\phi(y/\e))$; therefore, enlarging the domain of integration in $y$, we get
\begin{equation*}
F_\e(u_\e,\Lambda_{2,\e})\leq\sum_{k\in K_n}s |u_k|^2\int_{Q_k}\int_{\{|x-y|<1\}}a\Big(\frac{x}{\e},\frac{y}{\e}\Big)\frac{|\phi(x/\e)-\phi(y/\e)|^2}{|x-y|^{d+2s}}\,dy\,dx.
\end{equation*}
We split the inner integral into the contributions of $\{|x-y|\leq\e\}$ and $\{\e<|x-y|<1\}$.

Since $|u_k||\phi(x/\e)-\phi(y/\e)|\leq L|x-y|/\e$, integrating in polar coordinates we get
\begin{align*}
\sum_{k\in K_n}s|u_k|^2\int_{Q_k}\int_{\{|x-y|\leq\e\}}a\Big(\frac{x}{\e},\frac{y}{\e}\Big)\frac{|\phi(x/\e)-\phi(y/\e)|^2}{|x-y|^{d+2s}}\,dy\,dx&\leq\beta L^2\e^{-2}s\,\sigma_{d-1}\int_0^\e t^{1-2s}\,dt\\
&=\frac{\beta\sigma_{d-1}L^2}{2-2s}\,s\,\e^{-2s},
\end{align*}
which vanishes as $\e\to0$, since $s\to0$ and $\e^{-2s}\to\lambda<+\infty$.

Now, we set $b(x,y)\coloneqq a(x,y)|\phi(x)-\phi(y)|^2$, which is a $1$-periodic Borel function with $0\leq b\leq4\beta\|\phi\|^2_{L^\infty(\Rd)}$ and
\begin{equation*}
\overline b=\int_{Q}\int_{Q}a(x,y)|\phi(x)-\phi(y)|^2\,dx\,dy.
\end{equation*}
Applying \cref{lem:averaging} with $A=Q_k$, $c=1$ and outer radius $1$, together with \cref{obs:averaging-Linfty}, for $\e$ small enough  we get
\begin{multline*}
s\int_{Q_k}\int_{\{\e<|x-y|<1\}}a\Big(\frac{x}{\e},\frac{y}{\e}\Big)\frac{|\phi(x/\e)-\phi(y/\e)|^2}{|x-y|^{d+2s}}\,dy\,dx
\leq\frac{\sigma_{d-1}}{2}\big(\e^{-2s}-1\big)\,\overline b\,|Q_k|\\+4C_d\beta\|\phi\|^2_{L^\infty(\Rd)}\,\e^{-2s}\big(2^{d+4}s|Q_k|+2^d|\partial_{2\e}Q_k|\big),
\end{multline*}
where $C_d$ is the constant given by \cref{lem:averaging}. Since $s\to0$, $\e^{-2s}\to\lambda$, and $|\partial_{2\e}Q_k|\to0$, it follows that
\begin{eqnarray}\label{eq:Lambda2}\nonumber
\limsup_{\e\to0}F_\e(u_\e,\Lambda_{2,\e})&\leq&\frac{\sigma_{d-1}}{2}\sum_{k\in K_n}|u_k|^2|Q_k|\,(\lambda-1)\int_{Q}\int_{Q}a(x,y)|\phi(x)-\phi(y)|^2\,dx\,dy\\
&=&\frac{\sigma_{d-1}}{2}(\lambda-1)\int_{Q}\int_{Q}a(x,y)|\phi(x)-\phi(y)|^2\,dx\,dy\, \|u\|^2_{L^2(\Rd)}.
\end{eqnarray}
If $\lambda=+\infty$, then $\phi=0$, so that $F_\e(u_\e,\Lambda_{2,\e})=0$ and \eqref{eq:Lambda2} holds with the convention $(+\infty)\cdot0=0$.

Gathering \eqref{eq:decomposition}, \eqref{eq:Lambda3}, \eqref{eq:Lambda4}, \eqref{eq:Lambda1} and \eqref{eq:Lambda2}, and recalling \eqref{eq:a-lambda-z}, we obtain
\begin{align*}
\limsup_{\e\to0}F_\e(u_\e)&\leq\limsup_{\e\to0}\big(F_\e(u_\e,\Lambda_{1,\e})+F_\e(u_\e,\Lambda_{2,\e})\big)+\sum_{k=3}^4\limsup_{\e\to0}F_\e(u_\e,\Lambda_{k,\e})\\
&\leq\frac{\sigma_{d-1}}{2}A_\lambda(1,\varphi)\|u\|^2_{L^2(\Rd)}.
\end{align*}
By \eqref{eq:almost-min-0}, we conclude that
\begin{equation}\label{eq:limsup-delta}
\limsup_{\e\to0}F_\e(u_\e)\leq F_\lambda(u)+\frac{\sigma_{d-1}}{2}\,\delta\|u\|^2_{L^2(\Rd)}.
\end{equation}
By the arbitrariness of $\delta\to0$ and a diagonal argument, this gives the the $\Gamma$-limsup inequality.

\section{The case of a general $1<p<\infty$}
\label{sec:pneq2}
In this section, we briefly comment on the modifications needed to prove the analogue of Theorem \ref{thm:main} in the case of $1<p<\infty$. The preparatory results of Section \ref{subsec:asymptotic} hold for general $p$, {\em mutatis mutandis}, and similarly for the strategies carried out in the $\Gamma$-liminf/limsup inequalities. The only role of $p=2$ is played in the $\Gamma$-liminf inequality, for expanding $|u(x)-u(y)|^2$ and consequently proving an estimate of the form
\begin{equation}\label{eq:intro-p}
2s\int_B\int_{\R^d\setminus B_R}a\Big(\frac{x}{\e},\frac{y}{\e}\Big)\frac{|u_\e(x)-u_\e(y)|^2}{|y|^{d+2s}}\,dy\,dx\geq2s\int_B\int_{\R^d\setminus B_R}a\Big(\frac{x}{\e},\frac{y}{\e}\Big)\frac{|u_\e(x)|^2}{|y|^{d+2s}}\,dy\,dx+o(1),
\end{equation}
where $B\subset\Rd$ is a fixed ball and $R\gg1$. Indeed, expanding the square and throwing away the contribution from $|u(y)|^2$, one obtains an error where $u(y)$ appears to be raised to a lower power. This allows us to bound it by using that the $L^2$-norm of the singular kernels $s|y|^{-d-2s}$ in the region $\Rd\setminus B_R$ goes to zero as $R\to+\infty$ and $s\to0$. 

In the case of a general $p$, there is a natural replacement of this procedure. Indeed, for every $t,\tau\in\R$ it holds
\begin{equation*}
\big||t-\tau|^p-|t|^p\big|\leq C_p\big(|t|^{p-1}|\tau|+|\tau|^p\big),\qquad C_p\coloneqq p\max\{1,2^{p-2}\},
\end{equation*}
so that, integrating $\big||u_\e(x)-u_\e(y)|^p-|u_\e(x)|^p\big|$ against $s|y|^{-d-sp}$, the error in the analog of \eqref{eq:intro-p} is bounded by
\begin{equation*}
  2\beta C_p\,s\int_B\int_{\Rd\setminus B_R}\frac{|u_\e(x)|^{p-1}|u_\e(y)|}{|y|^{d+sp}}\,dy\,dx
+2\beta C_p\,s\int_B\int_{\Rd\setminus B_R}\frac{|u_\e(y)|^{p}}{|y|^{d+sp}}\,dy\,dx.  
\end{equation*}
In the formula above, the second term is bounded by  $C_{B,d,p} s\,R^{-d-sp}\sup_{\e>0}\|u_\e\|^p_{L^p(\Rd)}$, which is infinitesimal, while it can be shown that the first term goes to zero by improving the integrability of the kernels as in the case $p=2$, using a bound in some $L^{1+\delta}$ with $\delta>0$ instead of $L^2$.

Finally, the complete statement in the general case is as follows.
\begin{theo}[Homogenized Maz'ya--Shaposhnikova]\label{thm:main2}
Let $a\colon\Rd\times\Rd\to\R$ be a  Borel $1$-periodic function such that there exist $0<\alpha\leq\beta$ for which $\alpha\leq a(x,y)\leq\beta$ for all $x,y\in\Rd$. Let $1<p<\infty$, $\e>0$ and $s\coloneqq s(\e)\in(0,1)$. Let
\begin{equation}
F_{\e,p}(u)\coloneqq s\int_{\Rd}\int_{\Rd}a\Big(\frac{x}{\e},\frac{y}{\e}\Big)\frac{|u(x)-u(y)|^p}{|x-y|^{d+sp}}\,dx\,dy
\end{equation}
be defined in $L^p(\Rd)$. If $\lim_{\e\to0}s=0$ and
\begin{equation*}
\lim_{\e\to0}\e^{-sp}=\lambda\in[1,+\infty],
\end{equation*}
then
\begin{equation*}
\Gamma\text{-}\lim_{\e\to0}F_{\e,p}(u)=\frac{\sigma_{d-1}}{p}\,a_{\lambda,p}\|u\|^p_{L^p(\Rd)},
\end{equation*}
for all $u\in L^p(\Rd)$, where the  sequential $\Gamma$-limit is computed with respect to the weak topology of $L^p(\Rd)$, and $a_{\lambda,p}$ is given by 
\begin{eqnarray}\nonumber
&&\hskip-1.3cm a_{\lambda,p}\coloneqq\inf\bigg\{\int_{Q}\int_{Q}(a(x,y)+a(y,x))|1+\phi(x)|^p\,dx\,dy\\&&\ \quad+(\lambda-1)\int_{Q}\int_{Q} a(x,y)|\phi(x)-\phi(y)|^p\,dx\,dy:\phi\in L^p(Q)\text{ 
and }\int_Q\phi(x)\,dx=0\bigg\}
\end{eqnarray}
if $\lambda<+\infty$, while
\begin{eqnarray}
a_{+\infty,p}\coloneqq\ 2\int_{Q}\int_{Q}a(x,y)dx\,dy.
\end{eqnarray}
\end{theo}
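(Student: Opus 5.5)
The plan is to follow the $p=2$ proof step by step, replacing squares by $p$-th powers and keeping the two halves of the $\Gamma$-convergence separate. Lemmas \ref{lem:annuli}, \ref{lem:jump}, \ref{lem:averaging} and \ref{lem:long-range} carry over with the kernel $s|x-y|^{-d-sp}$: the polar-coordinate masses become $\frac{\sigma_{d-1}}{p}(r_1^{-sp}-r_2^{-sp})$, the constant $4\|u\|_\infty^2$ becomes $2^p\|u\|_\infty^p$, and in Lemma \ref{lem:jump} the condition $s<1/2$ becomes $sp<1$. By Remark \ref{rem:symmetrx} we again assume $a$ symmetric, and we set $\overline a^{\rm c}=2\overline a^y$. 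The cell functional is $A_{\lambda,p}(z,\phi)=\int_Q\overline a^{\rm c}|z+\phi|^p+(\lambda-1)\iint_{Q^2}a|\phi(x)-\phi(y)|^p$. It is positively $p$-homogeneous in $(z,\phi)$, so $\inf_\phi A_{\lambda,p}(z,\phi)=a_{\lambda,p}|z|^p$.

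\emph{Limsup.} We take $u$ piecewise constant on dyadic cubes; these are dense in $L^p$, and $F_{\lambda,p}$ is strongly continuous. We take a smooth zero-mean $\phi$ that is almost optimal, with $\phi=0$ if $\lambda=+\infty$, and set $u_\e=u(1+\phi(x/\e))$. We split the interactions into $\Lambda_{1,\e},\dots,\Lambda_{4,\e}$ as before. The contributions of $\Lambda_3$ and $\Lambda_4$ vanish by the $p$-versions of Lemmas \ref{lem:annuli} and \ref{lem:jump}, using $R^{-sp}\to1$ and $s\to0$. On $\Lambda_1$, Lemma \ref{lem:long-range} applied to $|u_\e|^p$, together with Riemann--Lebesgue, gives $\frac{\sigma_{d-1}}{p}\int_Q\overline a^{\rm c}|1+\phi|^p\,\|u\|_p^p$. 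On $\Lambda_2$, the part $|x-y|\le\e$ is $O(s\e^{-sp})\to0$ by the Lipschitz bound. For the part $\e<|x-y|<1$ we apply Lemma \ref{lem:averaging} with $b=a|\phi(x)-\phi(y)|^p$; this gives $\frac{\sigma_{d-1}}{p}(\lambda-1)\overline b\,\|u\|_p^p$.

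\emph{Liminf.} We follow the blow-up of Section \ref{sec:liminf} verbatim. This covers the truncation $u^M_\e$, the measures $\mu_{B,\e}$ built with $p$-th powers, the Lebesgue point $x_0$ with $\rho^{-sp}\to1$, and the periodic competitor $w^M_\e$ of Lemma \ref{lem:bis-competitor}. That lemma used only convexity of $G_\e$ and of $t\mapsto|t|^p$, the bound $|w|\le M$, and the capacity estimates, so it adapts directly, with errors $M^p(\e^{-sp}-\dots)\delta/\rho$ and $M^p((\delta/2)^{-sp}-(\sqrt d\rho)^{-sp})$. The short-range lower bound \eqref{eq:bis-I2} follows again from Lemma \ref{lem:averaging} with $b_\e=a|\phi_\e(x)-\phi_\e(y)|^p$. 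For $\lambda=+\infty$ we use the bound $\iint a|\phi_\e(x)-\phi_\e(y)|^p\ge\alpha\iint|\phi_\e(x)-\phi_\e(y)|^p\ge c_p\alpha\|\phi_\e\|_p^p$, which holds by Jensen since $\int_Q\phi_\e=0$. It forces $\phi_\e\to0$, and then $\int\overline a^{\rm c}|\nu_\e+\phi_\e|^p\to|\nu|^p\int\overline a^{\rm c}$.

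The main obstacle is the long-range term $I_{1,\e}$, where the square can no longer be expanded. We use $\big||t-\tau|^p-|t|^p\big|\le C_p(|t|^{p-1}|\tau|+|\tau|^p)$ with $t=u_\e(x)$ and $\tau=u_\e(y)$.
\begin{itemize}
\item The $|\tau|^p$ error is bounded by $C\beta s\,\rho^{-d}|Q_\rho|R^{-d-sp}\|u_\e\|_p^p$, which tends to $0$.
\item For the mixed error we apply Hölder in $y$ with exponents $(p,p')$. We get $\int_{\R^d\setminus B_R}|u_\e(y)|\,|x-y|^{-d-sp}dy\le\|u_\e\|_p\,\big\||\cdot|^{-d-sp}\big\|_{L^{p'}(\R^d\setminus B_R)}\le C\|u_\e\|_pR^{-d/p-sp}$. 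The $L^{p'}$ norm is finite and decays because $p'>1$; this is the ``improved integrability'' of the kernel.
\item This leaves $C s R^{-d/p}\rho^{-d}\int_{Q_\rho}|u_\e|^{p-1}$. We control $\rho^{-d}\int_{Q_\rho}|u_\e|^{p-1}$ by choosing $x_0$ as a Lebesgue point of the weak limit of $|u_\e|^{p-1}$ in $L^{p'}$. Alternatively, we let $\rho\to0$ slowly enough that $sR^{-d/p}\rho^{-d}\|u_\e\|_p^{p-1}\to0$, using Hölder on $Q_\rho$.
\end{itemize}
Then the $p$-version of Lemma \ref{lem:long-range} and \eqref{eq:bis-long-range} give $I_{1,\e}\ge\frac{\sigma_{d-1}}{p}\int_Q\overline a^{\rm c}|\nu_\e+\phi_\e|^p-o(1)$.

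Combining the two terms gives $\frac{\sigma_{d-1}}{p}A_{\lambda,p}(\nu_\e,\phi_\e)\ge\frac{\sigma_{d-1}}{p}a_{\lambda,p}|\nu_\e|^p$. Letting $\e\to0$ and then $M\to\infty$, and integrating over the balls $B$, yields the liminf inequality.
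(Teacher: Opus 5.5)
Your proposal is correct and follows essentially the same route as the paper. All the $p=2$ machinery carries over mutatis mutandis, and the only genuinely new step, the long-range term in the liminf, is handled exactly as in Section~\ref{sec:pneq2} via $\big||t-\tau|^p-|t|^p\big|\le C_p(|t|^{p-1}|\tau|+|\tau|^p)$. Your $L^{p'}$ bound on the kernel is the concrete form of the paper's unspecified $L^{1+\delta}$ improvement, and your treatment of the factor $\rho^{-d}\int_{Q_\rho}|u_\e|^{p-1}$ (either a Lebesgue point of its weak $L^{p'}$ limit, or H\"older together with the already imposed $s\rho^{-d}\to0$) fills in a detail the paper leaves implicit.
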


\section*{Acknowledgments}
Andrea Braides acknowledges funding by the Excellence Project MatMod@TOV (Grant Agreement No.~CUP\_E83C23000330006) of the
Department of Mathematics of the University of Rome Tor Vergata. 
Marco Picerni acknowledges funding by the Italian Ministry of University and Research (MUR) under the `Fondo Italiano per la Scienza' (FIS 3) program, PI: prof.\ Nicola Gigli, Project Title: Modern challEnges in Geometric Analysis - MEGA, CUP: G53C25000920001. All authors are members of the GNAMPA group of INdAM.

\subsection*{Statements}
The authors declare no conflict of interest. The work has no associated data.
For open access purposes, the authors have applied
a CC BY public copyright license to any accepted manuscript version arising from this
submission.

\printbibliography

@article {FM,
    AUTHOR = {Fonseca, Irene and M\"uller, Stefan},
     TITLE = {Quasi-convex integrands and lower semicontinuity in {$L^1$}},
   JOURNAL = {SIAM J. Math. Anal.},
  FJOURNAL = {SIAM Journal on Mathematical Analysis},
    VOLUME = {23},
      YEAR = {1992},
    NUMBER = {5},
     PAGES = {1081--1098},
      ISSN = {0036-1410},
   MRCLASS = {49J45 (49Q20)},
  MRNUMBER = {1177778},
MRREVIEWER = {Anna\ Salvadori},
       DOI = {10.1137/0523060},
       URL = {https://doi.org/10.1137/0523060},
}

@article {BMS,
    AUTHOR = {Braides, Andrea and Maslennikov, Mikhail and Sigalotti, Laura},
     TITLE = {Homogenization by blow-up},
   JOURNAL = {Appl. Anal.},
  FJOURNAL = {Applicable Analysis. An International Journal},
    VOLUME = {87},
      YEAR = {2008},
    NUMBER = {12},
     PAGES = {1341--1356},
      ISSN = {0003-6811,1563-504X},
   MRCLASS = {35B27 (49J45)},
  MRNUMBER = {2482386},
MRREVIEWER = {Dmitry\ Golovaty},
       DOI = {10.1080/00036810802555458},
       URL = {https://doi.org/10.1080/00036810802555458},
}

@article {BBD,
    AUTHOR = {Braides, Andrea and Brusca, Giuseppe Cosma and Donati, Davide},
     TITLE = {Another look at elliptic homogenization},
   JOURNAL = {Milan J. Math.},
  FJOURNAL = {Milan Journal of Mathematics},
    VOLUME = {92},
      YEAR = {2024},
    NUMBER = {1},
     PAGES = {1--23},
      ISSN = {1424-9286,1424-9294},
   MRCLASS = {49J45 (35B27 35R11)},
  MRNUMBER = {4747599},
MRREVIEWER = {Luca\ Lussardi},
       DOI = {10.1007/s00032-023-00389-y},
       URL = {https://doi.org/10.1007/s00032-023-00389-y},
}

@article{Quartet,
title = {Asymptotics of the $s$-perimeter as $s\searrow 0$},
journal = {Discrete and Continuous Dynamical Systems},
volume = {33},
number = {7},
pages = {2777-2790},
year = {2013},
issn = {1078-0947},
doi = {10.3934/dcds.2013.33.2777},
author = {Serena Dipierro and Alessio Figalli and Giampiero Palatucci and Enrico Valdinoci},
}

@misc{CaselliGennaioli,
      title={Asymptotics as $s \to 0^+$ of the fractional perimeter on Riemannian manifolds}, 
      author={Michele Caselli and Luca Gennaioli},
      year={2024},
      eprint={2306.11590},
      archivePrefix={arXiv},
      primaryClass={math.DG},
      url={https://arxiv.org/abs/2306.11590}, 
}

@misc{DaDfGioPi-Necess-Suff,
      title={Necessary and Sufficient Conditions for the Maz'ya-Shaposhnikova Formula in (Fractional) Sobolev Spaces}, 
      author={Elisa Davoli and Giovanni Di Fratta and Rossella Giorgio and Andrea Pinamonti},
      year={2025},
      eprint={2509.23226},
      archivePrefix={arXiv},
      primaryClass={math.AP},
      url={https://arxiv.org/abs/2509.23226}, 
}

@article{MazyaShaposhnikova2002,
  author  = {Maz'ya, V. and Shaposhnikova, T.},
  title   = {On the Bourgain, Brezis, and Mironescu theorem concerning limiting embeddings of fractional Sobolev spaces},
  journal = {Journal of Functional Analysis},
  volume  = {195},
  year    = {2002},
  pages   = {230--238}
}

@misc{HanPinamontirigidity,
      title={On the asymptotic behaviour of the fractional Sobolev seminorms in metric measure spaces: asymptotic volume ratio, volume entropy and rigidity}, 
      author={Han, Bang-Xian and Pinamonti, A.},
      year={2024},
      eprint={2108.06996},
      archivePrefix={arXiv},
      primaryClass={math.FA},
      url={https://arxiv.org/abs/2108.06996}, 
}

@article{GaussianFracPer,
    author = {Carbotti, A. and Cito, S. and La Manna, D.A. and Pallara, D.},
    title = {Asymptotics of the s-fractional Gaussian perimeter as $s\rightarrow 0^+$},
    journal = {Fract. Calc. Appl. Anal.},
    year = {2022},
    volume = {25},
    pages={1388–1403},
    doi={https://doi.org/10.1007/s13540-022-00066-8}
}

@article{HanPinamontiXu2025,
  author    = {Han, Bang-Xian and Pinamonti, A. and Xu, Z. and Zambanini, K.},
  title     = {Maz’ya--Shaposhnikova Meet Bishop--Gromov},
  journal   = {Potential Analysis},
  volume    = {63},
  pages     = {513--529},
  year      = {2025},
  doi       = {10.1007/s11118-024-10179-9},
  url       = {https://doi.org/10.1007/s11118-024-10179-9}
}

@article{Han-MS-BBM,
title = {On the asymptotic behaviour of the fractional Sobolev seminorms: A geometric approach},
journal = {Journal of Functional Analysis},
volume = {287},
number = {9},
pages = {110608},
year = {2024},
issn = {0022-1236},
doi = {https://doi.org/10.1016/j.jfa.2024.110608},
author = {Bang-Xian Han}
}

@article {CriDLKNP23,
    AUTHOR = {Crismale, V. and De Luca, L. and Kubin, A. and Ninno, A. and
              Ponsiglione, M.},
     TITLE = {The variational approach to {$s$}-fractional heat flows and
              the limit cases {$s\to0^+$} and {$s\to1^-$}},
   JOURNAL = {J. Funct. Anal.},
  FJOURNAL = {Journal of Functional Analysis},
    VOLUME = {284},
      YEAR = {2023},
    NUMBER = {8},
     PAGES = {Paper No. 109851, 38},
      ISSN = {0022-1236,1096-0783},
   MRCLASS = {49J45 (35K20 35R11)},
  MRNUMBER = {4544090},
MRREVIEWER = {Leszek\ Gasi\'{n}ski},
       DOI = {10.1016/j.jfa.2023.109851},
       URL = {https://doi.org/10.1016/j.jfa.2023.109851},
}

@article{bourgain:hal-00747692,
  TITLE = {{Another look at Sobolev spaces}},
  AUTHOR = {Bourgain, Jean and Brezis, Ha{\"i}m and Mironescu, Petru},
  URL = {https://hal.science/hal-00747692},
  NOTE = {Original research article appeared at in Optimal Control and Partial Differential Equations IOS Press ISBN 1 58603 096 5},
  PAGES = {439-455},
  YEAR = {2001},
  HAL_ID = {hal-00747692},
  HAL_VERSION = {v1},
}

@misc{BMS26,
      title={Homogenization and averaging in the {B}ourgain--{B}rezis--{M}ironescu limit}, 
      author={Andrea Braides and Stefano Mannella and Alec Jacopo Almo Schiavoni~Piazza},
      year={2026},
      eprint={2609.21933},
      archivePrefix={arXiv},
      primaryClass={math.AP},
      url={https://arxiv.org/abs/2609.21933}, 
}

@misc{PiniSantilli2026,
      title={$\Gamma$-convergence, variational analysis and characterisation of minimisers for $(s,p)$-Gagliardo energies in the flat $d$-torus}, 
      author={G. Pini and F. Santilli},
      year={2026},
      eprint={2604.25520},
      archivePrefix={arXiv},
      primaryClass={math.FA},
      url={https://arxiv.org/abs/2604.25520}, 
}

@misc{DaFaPi1,
    author = {Elisa Davoli and Alberto Fanizza and Marco Picerni},
    title = {Closing the gap: {Maz'ya--Shaposhnikova} and asymptotics of fractional perimeters},
    year = {2026},
    eprint = {2605.03955},
    archivePrefix = {arXiv},
    primaryClass = {math.AP},
    doi = {10.48550/arXiv.2605.03955},
    url = {https://arxiv.org/abs/2605.03955}
}
\ \bigskip

\end{document}